\newcommand{\abs}[1]{\left|#1\right|}
\newcommand{\R}{\mathbb{R}}
\newcommand{\x}{\mathbf{x}}
\newcommand{\vv}{\mathbf{v}}
\newcommand{\uu}{\mathbf{u}}
\newcommand{\ww}{\mathbf{w}}
\newcommand{\Id}{\mathrm{Id}}
\newcommand{\0}{\mathbf{0}}
\renewcommand{\div}{\mbox{\rm div}}
\newtheorem{lemma}{Lemma}
\newtheorem{corollary}{Corrolary}
\newtheorem{definition}{Definition}
\newtheorem{theorem}{Theorem}
\newtheorem{remark}{Remark}
\title{Stability Analysis of Flock and Mill rings for 2nd Order Models in Swarming}
\thanks{DB and JAC were partially supported by the project
MTM2011-27739-C04-02 DGI (Spain) and 2009-SGR-345 from
AGAUR-Generalitat de Catalunya. JAC acknowledges support from the
Royal Society through a Wolfson Research Merit Award. GA
acknowledges support from the Universit\`a di Ferrara through
``Fondi cinque per mille, anno 2009''. JvB acknowledges funding
from NSF grant EFRI-1024765 and NSF grant DMS-0907931. This work
was supported by Engineering and Physical Sciences Research
Council grant number EP/K008404/1.}
\author{G. Albi$^1$, D. Balagu\'e$^2$, J. A.
Carrillo$^3$ and J. von Brecht$^4$}
\address{$^1$Dipartimento di Matematica e Informatica\\
Universit\`a di Ferrara\\Ferrara 44121\\ Italy.\newline E-mail: {\tt
giacomo.albi@unife.it}.}
\address{$^2$Departament de
Matem\`a\-ti\-ques, Universitat Aut\`onoma de Barcelona, E-08193
Bellaterra, Spain. E-mail: {\tt dbalague@mat.uab.cat}.}
\address{$^3$Department of Mathematics\\ Imperial College
London\\ London SW7 2AZ\\ UK.\newline
E-mail: {\tt carrillo@imperial.ac.uk}.}
\address{$^4$Department of
Mathematics, University of California - Los Angeles, Los Angeles,
CA 90095,  USA. E-mail: {\tt jub@math.ucla.edu}.}
\begin{document}
\maketitle

\begin{abstract}
We study the linear stability of flock and mill ring solutions of
two individual based models for biological swarming. The
individuals interact via a nonlocal interaction potential that is
repulsive in the short range and attractive in the long range. We
relate the instability of the flock rings with the instability of
the ring solution of the first order model. We observe that
repulsive-attractive interactions lead to new configurations for
the flock rings such as clustering and fattening formation.
Finally, we numerically explore mill patterns arising from this
kind of interactions together with the asymptotic speed of the
system.
\end{abstract}

\pagestyle{myheadings} \thispagestyle{plain} \markboth{G. ALBI, D.
BALAGU\'E, J.A. CARRILLO, J. VON BRECHT}{STABILITY FOR 2ND ORDER
MODELS IN SWARMING}

\section{Introduction}
Individual-based models (IBMs) appear in biology, mathematics,
physics, and engineering. They describe the motion of a collection
of $N$ individual entities, so the system is defined on a
microscopic scale. IBMs are good models for some applications when
the number of particles is reasonable. Nonetheless, when the
number is large, it is more reasonable to use a continuum model.
Some continuum models, like the one in \cite{Dorsogna2,Dorsogna3},
are derived as a mean-field particle limit leading to a mesoscopic
kinetic description of the problem. At this level, one looks at
the probability density of finding particles at a certain position
and velocity at a given time. Several models have been proposed to
describe the flocking of birds
\cite{Camazine,science1999,ballerini2008interaction,lukeman}, the
formation of ant trails \cite{Dussutour}, the schooling of fish
\cite{Hemelrijk:Hildenbrandt,Bjorn,alethea}{}, swarms of bacteria
\cite{KochWhite}, etc.

These models can include some rules that describe the behavior of
each individual of the system. Such mechanisms can help to
describe the influence of each individual on the others, depending
on their relative position and velocity. For instance, one example
is the classical three zone model \cite{Aoki,Huth:Wissel}. A three
zone model describes how social the individual is in the following
sense. If two individuals are too close, they want to have their
own space (repulsion). When one individual is far from the group,
it wants to go back and socialize (attraction). And finally, in
the group, each individual tries to mimic the behavior of the
others (orientation). Other models just consider rules for
orientation, like the Vicsek model
\cite{Vicsek:Czirok1995,DegondMotsch}. In this case, there is a
mechanism of self-propulsion in which each individual moves with
constant speed and adopts the average direction among their local
neighbors.

We focus our study in the analysis of two particular examples of
IBMs. The first one is a self-propelled interacting particle model
that was introduced in \cite{Levine:2000} and extensively studied in \cite{Dorsogna,Dorsogna2}:
\begin{equation}
\begin{cases}
\displaystyle \dot{x}_j=v_j\\
\displaystyle \dot{v}_j=S(|v_j|)v_j+\frac{1}{N}\sum_{\substack{l=1\\l\neq j}}^N
\nabla W(x_l-x_j)
\end{cases},\quad j=1,\dots, N\label{model}
\end{equation}
We are going to consider the same \emph{self-propulsion/
friction} term used in \cite{Dorsogna,Dorsogna2},
\begin{equation*}
S(|v_j|)=\alpha-\beta|v_j|^2,\qquad \alpha,\beta>0.
\end{equation*}
Note that such a term gives us an asymptotic speed for the
particles, equal to $\sqrt{\alpha/\beta}$. In these references,
the authors study \eqref{model} with pairwise interaction given by
the so-called Morse potential
\[
  U(r)=C_Ae^{-r/l_A}-C_Re^{-r/l_R},
\]
with $C_A$, $C_R$ denoting the attractive and repulsive
strengths and $l_A$, $l_R$ their respective length scales. They
find and describe several patterns for the asymptotic behavior in
2D. They observed flocking behavior, mill on a ring, and
clustering when particles are milling. In \cite{CCR11}, a
well-posedness theory is developed for \eqref{model} proving the
mean-field limit under smoothness assumptions on the potential.
The authors show convergence of the particle model toward a
measure solution of the kinetic equation.

We perform an analysis on the stability of flock rings and the
mill rings as asymptotic solutions for \eqref{model}. The ring
solution was recently studied in \cite{KSUB,BUKB2} where the
authors in this work do a careful general linear analysis of the
rings for the first order model
\begin{equation}\label{eq:firstorder}
\dot{X}_j=\sum_{\substack{l=1\\l\neq j}}^N \nabla
W\left(X_j-X_l\right), \quad j=1,\dots, N.
\end{equation}
Part of the analysis of \eqref{eq:firstorder} is used to study the
the stability of mill rings in \eqref{model}, described in
\cite{Dorsogna}. Related pattern formation in the associated first
order model has been studied in \cite{BUKB,KHP}.

Another second order model that we are going to study is
\begin{equation}
\begin{cases}
\displaystyle \dot{x}_j=v_j\\
\displaystyle \dot{v}_j=\frac{1}{N}\sum_{l=1}^NH(x_j-x_l)(v_l-v_j)+\frac{1}{N}\sum_{\substack{l=1\\l\neq j}}^N
\nabla W(x_l-x_j)
\end{cases},\quad j=1,\dots, N\label{model2}
\end{equation}
with $x_j,v_j\in\mathbb{R}^2$ where the velocity $v_j$ is
described by
the Cucker-Smale alignment term $H$ and the pairwise interaction by a repulsive-attractive radial potential  $W(x)=k(|x|)$.

Even if the analysis has been done in full generality for the
parameter functions of the model $H$ and $W$, we will emphasize
the results in some relevant cases. For instance, we consider the
case of power law repulsive-attractive potentials \cite{KSUB,BCLR}
\begin{equation}
k(r)=\frac{r^a}{a}-\frac{r^b}{b},\qquad a>b>0. \label{potential}
\end{equation}
For the {Cucker-Smale} alignment \cite{CuckerSmale1,CuckerSmale2,Tadmor2008,HaLiu,CFRT}, a relevant case is
$H(x)=g(|x|)$ with
\begin{equation*}
g(r)=\frac{1}{(1+r^2)^\gamma},\qquad \gamma>0.
\end{equation*}

The main results of this work show that the flock ring is unstable
in the second order models \eqref{model} and \eqref{model2} if and
only if its spatial shape is unstable in the first order model
\eqref{eq:firstorder}. We use the same kind of strategy as in
\cite{BUKB2}.

The main idea is to study the stability of the system of ODEs
\eqref{model} by analyzing the eigenvalues of a suitable
linearization with restricted perturbations. We consider
particular perturbations of the flock rings in such a way that
translational invariance is avoided while preserving the mean
velocity. This is really needed since the linearized system
associated to \eqref{model} is always linearly unstable due to
translations. Translational invariance implies the existence of a
generalized eigenvector associated to the zero eigenvalue of the
matrix defining the linearized system. We characterize all cases
in which the linearized system has eigenvalues with zero real part
and their consequences in the instability condition. An analysis
of the stability of the family of flock solutions is under way in
\cite{CHM}.

In addition to flock rings, other spatial shapes are possible as
asymptotic solutions. One can also observe flocks on annuli
(fattening), lines or points (clustering). These patterns can be
explained due to the results in \cite{BCLR2}.

For the mill ring analysis, we start from the results of
\cite{BUKB2} to explore other mill configurations that appear with
repulsive-attractive potentials. We numerically investigate the
formation of fat mills due to the repulsive force and the
formation of clusters when varying the asymptotic speed. In
addition, we show some switching behaviors between flock and mill
rings.

The structure of the paper is as follows. In Section~\ref{sec:MM1}
we study the microscopic and the mesoscopic models and we give the
definitions of the main objects in our studies, the flock and mill
rings. In Section~\ref{sec:linear_flock} we do a linear stability
analysis on the flock rings for models \eqref{model} and
\eqref{model2}. We also explore the fattening and cluster
formation. Finally, in Section~\ref{sec:millstability} we do a
similar study on the stability for mill rings. In all sections we
have performed several numerical tests supporting our theoretical
results.

\section{Microscopic \& Mesoscopic models}\label{sec:MM1}
Let us introduce some particular solutions of the particle model
\eqref{model} and its continuum counterpart.
\subsection{Flock and mill solutions: microscopic model}
\begin{definition}
We call a \emph{flock ring}, the solution of \eqref{model} such
that $\lbrace x_j\rbrace_{j=1}^N$ are equally distributed on a circle with a certain
radius, $R$ and $\lbrace v_j\rbrace_{j=1}^N=u_0$, with $\abs{u_0}=\sqrt{\alpha/\beta}$.
\end{definition}\strut
\begin{definition}
We call a \emph{mill ring}, the solution of \eqref{model} such
that $\lbrace x_j\rbrace_{j=1}^N$ are equally distributed on a
circle with a certain radius, $R$ and $\lbrace v_j\rbrace_{j=1}^N
=u^0_j=\sqrt{\alpha/\beta}\,x_j^\perp/|x_j|$ with $x_j^\perp$ the
orthogonal vector.
\end{definition}

\begin{figure}[ht]
\centering
\includegraphics[scale=0.4]{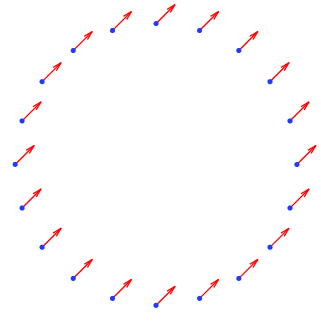}
\hspace{2cm}
\includegraphics[scale=0.4]{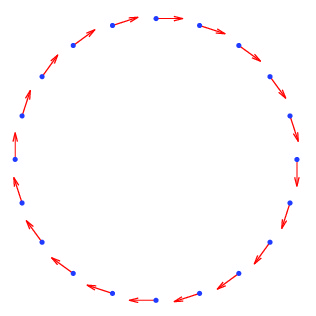}
\caption{Flock and mill ring solutions.}
\end{figure}

By abuse of notation, we will write $\abs{u_0}$ for $\abs{u^0_j}$
since $\abs{u^0_j}=\sqrt{\alpha/\beta}$ for all $j=1,\dots, N$.
Moreover, we will make use of notation $\abs{u_0}$ for both flock
and mill rings indistinctly.

All over the paper, we
will identify $e^{i\theta}\equiv(\cos\theta,\sin\theta)$ and use
$x$ to identify the vector and the complex numbers indistinctly
\begin{equation}\label{eq:flockringsol}
   x_j(t)=R\left(\cos\left(\frac{2\pi}{N}j+\omega t\right),\sin\left(\frac{2\pi}{N}j+\omega t\right)\right)=Re^{i\frac{2\pi j}{N}}e^{i\omega t}.
\end{equation}
In the case of mill rings, we are looking for a solution of the form \eqref{eq:flockringsol}. The case of a flock ring in the comoving frame is equivalent to looking for a solution of the form \eqref{eq:flockringsol} with $\omega=0$. Plugging \eqref{eq:flockringsol} into \eqref{model}, we obtain
\[
    \sum_{\substack{l=1\\l\neq j}}^N\nabla W(x_j-x_l)=0;
\]
therefore, $R$ is determined only by the repulsive-attractive potential.

In the case of mill rings, we have
\[
   v_j(t)=\dot{x}_j(t)=R\omega i e^{i\theta_j}e^{i\omega t},\quad \theta_j=\frac{2\pi j}{N},
\]
and thus, $R^2\omega^2=\frac{\alpha}{\beta}$. Moreover, by taking the derivative
\[
   \dot{v}_j=-\omega^2x_j=-\omega^2\frac{1}{N}\sum_{\substack{l=1\\l\neq j}}^N(x_l-x_j)=0.
\]
Plugging this into \eqref{model}, we get
\[
   \sum_{\substack{l=1\\l\neq j}}^N\left[ \nabla W(x_l-x_j)-\omega^2(x_l-x_j)\right]=\sum_{\substack{l=1\\l\neq j}}^N\nabla \tilde{W}(x_l-x_j)=0,
\]
with $\tilde{W}(x)=W(x)-\omega^2\frac{\abs{x}^2}{2}$. Thus
in order to find the radius for flock and mill rings, we need to
solve the same equation. This expression implies that the
spatial shape has to balance attraction versus repulsion and
centrifugal forces. Now, a direct computation yields
\[
   \abs{x_j-x_l}=2R\sin\left(\frac{(l-j)\pi}{N}\right)
\]
for all times. One can easily compute that
\[
  x_l-x_j=2R\sin\left(\frac{p\pi}{N}\right)
\begin{pmatrix}
-\sin\left(\frac{p\pi}{N}\right) & \cos\left(\frac{p\pi}{N}\right) \\
\\
\cos\left(\frac{p\pi}{N}\right) & \sin\left(\frac{p\pi}{N}\right)
\end{pmatrix}
\begin{pmatrix}
\cos\left(\theta_j\right)\\
\\
\sin\left(\theta_j\right)
\end{pmatrix}, \quad p=l-j,
\]
and
\begin{align*}
\sum_{\substack{l=1\\l\neq j}}^N\nabla\tilde{W}(|x_j-x_l|)&=\sum_{\substack{l=1\\l\neq j}}^N(x_j-x_l)\frac{\tilde{k}'(|x_j-x_l|)}{|x_j-x_l|}\\
&=\sum_{\substack{l=1\\l\neq j}}^N
\begin{pmatrix}
-\sin\left(\frac{p\pi}{N}\right)\cos\left(\theta_j\right) + \cos\left(\frac{p\pi}{N}\right)\sin\left(\theta_j\right)\\
\\
\cos\left(\frac{p\pi}{N}\right)\cos\left(\theta_j\right)+
\sin\left(\frac{p\pi}{N}\right)\sin\left(\theta_j\right)
\end{pmatrix}\tilde{k}'(|x_j-x_l|).
\end{align*}
By symmetry we can assume $j=N$, and thus
\[
\sum_{\substack{l=1\\l\neq j}}^N\nabla\tilde{W}(|x_j-x_l|) = \sum_{p=1}^{N-1}
\begin{pmatrix}
-\sin\left(\frac{p\pi}{N}\right)\\
\\
\cos\left(\frac{p\pi}{N}\right)
\end{pmatrix}\tilde{k}'(|x_j-x_l|),
\]
changing $j$ to $N-j$, we finally obtain
\begin{equation}\label{eq:condition15}
\sum_{\substack{l=1\\l\neq j}}^N\nabla\tilde{W}(|x_j-x_l|)=\begin{pmatrix}
-\displaystyle\sum_{p=1}^{N-1}\sin\left(\frac{p\pi}{N}\right)\tilde{k}'\left(2R\sin\left(\frac{p\pi}{N} \right)\right)\\
\\
0
\end{pmatrix}.
\end{equation}
As a conclusion, the radius of a flock or mill ring solution is characterized by
\[
\displaystyle\sum_{p=1}^{N-1}\sin\left(\frac{p\pi}{N}\right)\tilde{k}'\left(2R\sin\left(\frac{p\pi}{N} \right)\right)=0.
\]
For general potentials there can be more than one flock or mill solution. In the case of the power law potentials \eqref{potential}, there is only one solution.
Condition \eqref{eq:condition15} reads
\begin{equation}\label{eq:radiuseq}
 \left(2R\right)^{a-1}\frac{1}{N}\sum_{p=0}^{N-1}\sin^a\left(\frac{p\pi}{N}\right)-\left(2R\right)^{b-1}\frac{1}{N}\sum_{p=0}^{N-1}\sin^b\left(\frac{p\pi}{N}\right)-2R\omega^2\frac{1}{N}\sum_{p=0}^{N-1}\sin^2\left(\frac{p\pi}{N}\right)=0.
\end{equation}
To prove uniqueness, we notice that the function $f(r)=C_1 r^a - C_2 r^b -C_3$ with $a>b>0$ and $C_1>C_2>0$, $C_3>0$, {has only one zero}.
Computing the first derivative and looking for critical points,
we obtain $r_1=0$ and $r_2^{{a-b}}=\frac{C_2 b}{C_1
 a}$. Taking the second derivative and
evaluating at $r_2$, one obtains $f''(r_2)= r_2^{b-2}C_2b(a-b) >0$,
so $r_2$ is a local minimum. For $0<r<r_2$, one has $f'(r)<0$ whereas for all $r\in (r_2,+\infty)$, one has $f'(r)>0$. Then we conclude that $f(r)$ has a unique zero.
Notice that the solution to \eqref{eq:radiuseq} depends on the number of particles and we will use the notation $R=R(N)$.
\subsection{Flock and mill solutions: mesoscopic model}
In this subsection we characterize the radius of flock and mill rings.  We introduce the function
\begin{equation*}
\psi_\alpha(s)=\frac{1}{\pi}\int_{0}^{\pi}{(1-s\cos\theta)(1+s^2-2s\cos\theta)^\frac{\alpha-2}{2}}\,d\theta,
\end{equation*}
already analyzed in \cite{BCLR}. A change of variables in the previous function shows that
\begin{equation}\label{eq:psifunc}
\psi_\alpha(1)=\frac{2^{\alpha-1}}{\pi} B\left( \frac{\alpha+1}{2},\frac{1}{2}\right).
\end{equation}
\begin{lemma} If $N\to \infty$ then $R(N)\to R_{ab}(\abs{u_0})$ where $R_{ab}(\abs{u_0})$ is the solution of the following equation:
\[
    \psi_a(1)R^{a-1}-\psi_b(1)R^{b-1}-{\omega^2}{R}=0.
\]
\end{lemma}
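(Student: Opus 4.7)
The plan is to pass to the limit $N\to\infty$ in the discrete radius equation \eqref{eq:radiuseq} by recognizing each of the three sums there as a Riemann sum, and then to argue that the unique root $R(N)$ of the $N$-particle equation converges to the unique root $R_{ab}(|u_0|)$ of the limiting equation. The proof splits naturally into (i) a limit computation for each sum and (ii) a stability/uniqueness argument for the roots.

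First, I would observe that for any exponent $\alpha>0$, the sum
\[
\frac{1}{N}\sum_{p=0}^{N-1}\sin^\alpha\!\left(\frac{p\pi}{N}\right)
\]
is exactly a Riemann sum for the continuous, bounded function $\theta\mapsto \sin^\alpha\theta$ on $[0,\pi]$ against the uniform partition with step $\pi/N$, divided by $\pi$. Hence as $N\to\infty$ it converges to $\frac{1}{\pi}\int_0^\pi \sin^\alpha\theta\,d\theta$. Using the classical Beta-integral identity $\int_0^\pi \sin^\alpha\theta\,d\theta = B\!\left(\tfrac{\alpha+1}{2},\tfrac{1}{2}\right)$ and the formula \eqref{eq:psifunc}, I would then write
\[
(2R)^{\alpha-1}\,\frac{1}{N}\sum_{p=0}^{N-1}\sin^\alpha\!\left(\frac{p\pi}{N}\right)
\;\longrightarrow\; R^{\alpha-1}\,\frac{2^{\alpha-1}}{\pi}\,B\!\left(\tfrac{\alpha+1}{2},\tfrac{1}{2}\right) = R^{\alpha-1}\psi_\alpha(1),
\]
applied to $\alpha=a$ and $\alpha=b$. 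For the centrifugal term the same argument with $\alpha=2$ gives $\frac{1}{\pi}\int_0^\pi\sin^2\theta\,d\theta=\frac{1}{2}$, so the last term of \eqref{eq:radiuseq} tends to $\omega^2 R$. Collecting, the left-hand side of \eqref{eq:radiuseq} converges pointwise in $R$ to
\[
F(R) := \psi_a(1)R^{a-1}-\psi_b(1)R^{b-1}-\omega^2 R,
\]
which is precisely the function whose zero defines $R_{ab}(|u_0|)$.

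Second, I have to promote pointwise convergence of the equations into convergence of their roots. The same monotonicity argument the authors already used to show that $f(r)=C_1r^a-C_2r^b-C_3$ has a unique positive zero applies to both the $N$-th equation and to $F$: each has a single sign change, with the function negative for small $R$ and eventually positive and strictly increasing. Since the coefficients of the $N$-th equation converge to those of $F$, and the convergence of the Riemann sums is uniform on compact intervals of $R$ (the integrands are continuous and bounded, and $R^{\alpha-1}$ is continuous), one has uniform convergence of the $N$-th polynomial-type function to $F$ on any compact set $[\epsilon,M]\subset(0,\infty)$. A standard argument then shows that the sequence of unique roots $R(N)$ is bounded and bounded away from zero (using that $F$ is strictly negative near $0$ and strictly positive for large $R$), hence has limit points, and any limit point must be a zero of $F$; by uniqueness, the full sequence converges to $R_{ab}(|u_0|)$.

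The main obstacle is the last step: producing a priori bounds on $R(N)$ so that one is allowed to pass to the limit on a compact interval. A clean way to do this is to exhibit, independently of $N$, constants $0<\epsilon<M$ such that the $N$-th function is strictly negative at $R=\epsilon$ and strictly positive at $R=M$ for all $N$ large enough; this forces $R(N)\in[\epsilon,M]$ by the intermediate value theorem and the established monotonicity. Such bounds follow from the convergence of the three Riemann sums to the explicit positive constants appearing in $F$, together with the fact that $a>b>0$ controls the behaviour at $0$ and $\infty$. Once this is in place, the proof reduces to routine arithmetic with the Beta function identity.
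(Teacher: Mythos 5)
Your proposal is correct and follows essentially the same route as the paper: recognize each sum in \eqref{eq:radiuseq} as a Riemann sum, evaluate the limits via the Beta-function identity and \eqref{eq:psifunc} (the paper computes the $\sin^2$ sum exactly rather than as an integral, but this is immaterial), and identify the limiting equation. Your second step, upgrading convergence of the coefficients to convergence of the roots $R(N)\to R_{ab}(|u_0|)$ via a priori bounds and uniqueness, is a sound addition that the paper leaves implicit.
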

\begin{proof}
We first take
$2^{a-1}\frac{1}{N}\sum_{p=0}^{N-1}\sin^a(\frac{p\pi}{N})$.
Multiplying and dividing by $\pi$ we obtain the following equality
\begin{equation}\label{eq:intsin^a}
\lim_{N\to\infty}2^{a-1}\frac{1}{\pi}\left(\frac{\pi}{N}\sum_{p=0}^{N-1}\sin^a\left(\frac{p
\pi}{N}\right)\right)=
2^{a-1}\frac{1}{\pi}\int_{0}^{\pi}\sin^a(x)\,dx =
2^{a-1}\frac{1}{\pi} \left(2\int_{0}^{\pi/2}\sin^a(x)\,dx\right).
\end{equation}
Now, we use the following expression for the Beta function $$B(x,y)=2\int_{0}^{\pi/2}(\cos \theta)^{2x-1}(\sin\theta)^{2y-1}\,d\theta,$$
with $x=\frac{1}{2}$, $y=\frac{a+1}{2}$, and using that
$B(x,y)=B(y,x)$ in \eqref{eq:intsin^a} together with \eqref{eq:psifunc} to obtain
\begin{equation*}
\lim_{N\to\infty}2^{a-1}\frac{1}{\pi}\left(\frac{\pi}{N}\sum_{p=0}^{N-1}\sin^a\left(\frac{p
\pi}{N}\right)\right)=
2^{a-1}\frac{1}{\pi}B\left(\frac{a+1}{2},\frac{1}{2}\right)=\psi_a(1).
\end{equation*}
The same reasoning works by changing $a$ for $b$ in the second
term in \eqref{eq:radiuseq}. For the third term we use the fact
that we can compute the exact sum
\[
    \frac{2}{N}\sum_{p=0}^{N-1}\sin^2\left(\frac{p\pi}{N}\right)=1.
\]
\end{proof}
\begin{remark} In the case of flock rings $\omega=0$, their radius is determined by the radius of the aggregation ring found in {\rm\cite{BCLR}}
\[
R(N)\to R_{ab}=\frac{1}{2}\left( \frac{B( \frac{b+1}{2},\frac{1}{2})}{B(
\frac{a+1}{2},\frac{1}{2})}\right)^\frac{1}{a-b}
\quad\text{as}\quad N\to\infty.
\]
\end{remark}
\begin{remark}
Let $W(x)=k(|x|)$ be a general interaction potential. Call $f(r)=-k'(r)/r$. Then the radius of the ring is determined by
\[
   \int_{0}^{\frac{\pi}{2}}f(2R\sin(\theta))\sin^2(\theta)\,d\theta=0,
\]
as shown in {\rm\cite{BUKB2}}.
\end{remark}
\begin{remark}
The corresponding mesoscopic model to the particle system
\eqref{model}, as proven in {\rm\cite{CCR11}}, is given by the
kinetic equation
\begin{equation}\label{eq:kineticmodel}
  \frac{\partial f}{\partial t}+v\cdot\nabla_x f +\div_v[(\alpha-\beta|v|^2)vf)]-\div_v[(\nabla_x W\ast \rho)f]=0,
\end{equation}
where
\[
   \rho(t,x)=\int_{\R^2} f(t,x,v)\,dv.
\]
It was shown in {\rm \cite{Dorsogna3}} that singular solutions of the type
\[
   f(t,x,v)=\rho(t,x)\delta\left(v-u_0\right),\qquad f(t,x,v)=\rho(t,x)\delta\left(v-\sqrt{\frac{\alpha}{\beta}}\frac{x^\perp}{x}\right),
\]
with $\rho(t,x)$ the uniform distribution on a ring, are weak solutions of the kinetic model \eqref{eq:kineticmodel}, called the flock and mill ring continuous solutions respectively.
\end{remark}
\section{Linear stability analysis for flock rings}\label{sec:linear_flock}
We will now focus on the stability analysis of flock rings for some particular perturbations in terms of the parameters of the model $(a,b,u_0)$. We take advantage of the careful stability analysis of the ring solutions of the aggregation equation performed in \cite{BUKB2}.
\subsection{Stability of flock solutions without the Cucker-Smale term}\label{sec:nocuckersmale}
We consider the model (\ref{model})
and we perform the change of variables to the comoving frame
\begin{equation}
\begin{cases}
y_j(t)=x_j(t)-u_0t\\
z_j(t)=v_j(t)-u_0
\end{cases}\qquad j=1,\ldots, N,\label{eq:changeflock}
\end{equation}
where $u_0$ is the asymptotic velocity of a fixed flock ring. Therefore the system
\eqref{model} reads
\begin{equation*}
\begin{cases}
\displaystyle\frac{d}{dt}y_j=v_j-u_0=z_j
\\
\displaystyle\frac{d}{dt}z_j=\underbrace{(\alpha-\beta|z_j+u_0|^2)}_{S_0(|z_j|)}(z_j+u_0)-\frac{1}{N}\sum_{\substack{k=1 \\ k\neq j}}^N
\nabla W(y_j-y_k)
\end{cases}, \quad j=1,\ldots, N. 
\end{equation*}

A {flock ring} can then be characterized as a stationary solution of the form $\left(y_{j}^0,z_{j}^0\right)=\left(Re^{i\theta_j},0\right),
\label{tflocksol}$
where $\theta_j=\frac{2\pi j }{N}$ for $j=1,\dots,N$.
This stationary solution satisfies
\[
S_0(|z^0_j|)=0,\quad \nabla S_0(|z^0_j|)=-2\beta u_0u_0^*,\quad
0=\sum_{k\neq
j}k'(|y^0_j-y^0_k|)\frac{(y^0_k-y^0_j)}{|y^0_k-y^0_j|}.
\]
As in \cite{BUKB2}, we
restrict the set of possible perturbations of the {flock solution} to those of the form
\begin{equation*}
\tilde{y}_j(t)=Re^{i\theta_j}(1+h_j(t)),\qquad
\end{equation*}
where $h_j\in\mathbb{C}$, such that $|h_j|\ll1$ and
\begin{equation}\label{eq:perturbationhj}
    \sum_{j=1}^N h_j(t)=\sum_{j=1}^N h_j'(t) = 0.
\end{equation}
The first restriction is to avoid the zero eigenvalue due to
translations. The second one comes from the fact that the mean
velocity of the perturbed system should be $u_0$. More general
perturbations will generically lead to other flock solutions with
different asymptotic velocity $u_0$. Their orbital stability will
be analyzed elsewhere \cite{CHM}. Therefore, the perturbed system
reads
\begin{equation*}
\begin{cases}
\displaystyle\frac{d}{dt}\tilde{y}_j(t)=Re^{i\theta_j}h'_j=\tilde{z}_j
\\
\displaystyle\frac{d}{dt}\tilde{z}_j(t)=Re^{i\theta_j}h''_j=S_0(|\tilde{z}_j|)-\frac{1}{N}\sum_{\substack{l=1 \\ l\neq j}}^N
\nabla W(\tilde{y}_j-\tilde{y}_l)
\end{cases}, \quad j=1,\ldots, N.
\label{flock4}
\end{equation*}

The linearization of the system around the {flock solution} $(y^0_j,z^0_j)$ reads as 
\begin{equation*}
\begin{cases}
\displaystyle\frac{d}{dt}\tilde{y}_j(t)=Re^{i\theta_j}h'_j=\tilde{z}_j
\\
\displaystyle Re^{i\theta_j}h''_j=-2\beta u_0u_0^*-\frac{1}{N}\sum_{\substack{l=1 \\ l\neq j}}^N
\nabla W(\tilde{y}_j-\tilde{y}_l)
\end{cases},\quad j=1,\ldots, N. \label{flock5}
\end{equation*}
From the previous equation, we can characterize $h_j''$ as
$$h_j''=\sum_{l\neq j}\left[G_1(\phi/2)(h_j-e^{i\phi}h_l)+G_2(\phi/2)(\bar{h_l}-e^{i\phi}\bar{h_j})]\right]-2\beta u_0u_0^Th'_j,$$

where $\phi=\frac{2\pi(l-j)}{N}$ and
\begin{align*}
G_1(\phi)=&\frac{1}{2N}\left[-a(2R|\sin\phi|)^{a-2}+b(2R|\sin\phi|)^{b-2}
\right],
\\
G_2(\phi)=&\frac{1}{2N}\left[-(a-2)(2R|\sin\phi|)^{a-2}+(b-2)(2R|\sin\phi|)^{b-2}
\right],
\end{align*}
for the power law potentials.
The details of these previous computations for general potentials can be found in {\rm\cite{BUKB2}}.

Let us consider the following ansatz for perturbations $h_j$
\begin{equation}\label{eq:ansatz}
h_j=\xi_+(t)e^{im\theta_j}+\xi_{-}(t)e^{-im\theta_j},\qquad m=2,3,\dots,
\end{equation}
which satisfies conditions \eqref{eq:perturbationhj}.
We need to exclude the case $m=1$ since it leads to a zero eigenvalue due to the rotational invariance of the system.
Following the same strategy as in \cite{BUKB2} and some computations, we finally deduce
\[
\begin{pmatrix}
\xi_+''
\\
{\xi_-''}
\end{pmatrix}
= \underbrace{
\begin{pmatrix}
I_1(m) & I_2(m)
\\
I_2(m) & I_1(-m)
\end{pmatrix}
}_{{M}}
\begin{pmatrix}
\xi_+
\\
{\xi_-}
\end{pmatrix}
-2\beta u_0u_0^T
\begin{pmatrix}
\xi'_+
\\
{\xi'_-}
\end{pmatrix},
\]
with $I_1$ and $I_2$ real functions given by
\begin{align}
I_1(m)=& \sum_{l\neq
j}G_1(\phi/2)(1-e^{i(m+1)\phi})=4\sum_{p=1}^{N/2}G_1\left(\frac{\pi
p}{N}\right)\sin^2\left(\frac{(m+1)\pi p}{N}\right),\label{eq:I1}
\\
I_2(m)=& \sum_{l\neq j}G_2\left(\phi/2\right)(e^{i m
\phi}-e^{i\phi})=4\sum_{p=1}^{N/2}G_2\left(\frac{\pi
p}{N}\left)\left[\sin^2\left(\frac{\pi p}{N}\right)-\sin^2\right(\frac{m\pi
p}{N}\right)\right].\label{eq:I2}
\end{align}
The previous system can be written also in the following form
\begin{equation}\label{eq:bigmatrix1}
\frac{d}{dt}
\begin{pmatrix}
\xi_+ \\
\bar{\xi_-}\\
\eta_+ \\
\bar{\eta_-}
\end{pmatrix}
=
\begin{pmatrix}
{0} &\Id\\
{M} & {-2\beta u_0u_0^T}
\end{pmatrix}
\begin{pmatrix}
\xi_+\\
\xi_-\\
\eta_+\\
\eta_-
\end{pmatrix}=L
\begin{pmatrix}
\xi_+\\
\xi_-\\
\eta_+\\
\eta_-
\end{pmatrix},
\end{equation}
where $(\eta_+,{\eta}_-)=(\xi'_+,{\xi}'_-)$.

If we do not assume \eqref{eq:perturbationhj} and
\eqref{eq:ansatz}, then we cannot reduce the analysis to a
$4\times4$ system. An arbitrary perturbation for general flocks
leads instead to a matrix of the form
\[
L=
\begin{pmatrix}
0 & \; \;\mathrm{Id}\\
\\
\mathbf{M} & -2\beta U
\end{pmatrix},
\]
where the partition into $2N \times 2N$ sub-blocks reflects the
distinction between position and velocity contributions to the
Jacobian: The symmetric matrix $\mathbf{M}$ is the $2N\times 2N$
Hessian that results from linearizing the first order system
\eqref{eq:firstorder} about a given flocking configuration,
whereas $U$ denotes a block-diagonal matrix with $N$ blocks of the
$2 \times 2$ matrix ${u_0}{u_0^T}$ along the diagonal. By
rotational invariance we can reduce to the case $u_0=e_1=(1,0)$,
so that the block matrix $U$ acts on $\x = (x_1,\ldots,x_N)^{T}
\in \mathbb{R}^{2N},$ $x_i \in \mathbb{R}^2$, according to the
relation
\begin{equation*}
(U\x)_{i} = \begin{pmatrix} \langle x_i , e_1 \rangle \\ 0 \end{pmatrix}.
\end{equation*}

We now turn to the task of characterizing the eigenvalues of $L$
in terms of the eigenvalues of $\mathbf{M}$. In other words, we
aim to characterize the stability of a flock in terms of the
stability of its spatial shape as a solution to the first order
model. To fix the notation, we write the eigenvalue problem for
the flock as
\begin{equation}
\lambda
\begin{pmatrix}
\x \\ \vv
\end{pmatrix}
=
\begin{pmatrix}
0 & \Id \\
\mathbf{M} & -2\beta U
\end{pmatrix}
\begin{pmatrix}
\x \\ \vv
\end{pmatrix} =
L
\begin{pmatrix}
\x \\ \vv
\end{pmatrix},
\label{eq:eig}
\end{equation}
where the matrix $\mathbf{M}$ determines the stability of the
flocking configuration as a solution of the first order model. For
any given eigenvector $(\x,\vv) \in \mathbb{C}^{2N} \times
\mathbb{C}^{2N}$ of the full system \eqref{eq:eig}, we always
assume the normalization $\x^*\x=1$. Substituting the first
equation $\lambda \x = \vv$ into the second equation yields the
equivalent statement
\begin{equation}
\lambda^2 \x + 2\beta \lambda U \x - \mathbf{M}\x = 0.
\label{eq:eivec}
\end{equation}
Let $|\x|_{2}$ denote the semi-norm on $\mathbb{C}^{2N}$ defined according to
\begin{equation*}
|\x|^{2}_{2} := \sum^{N}_{i=1} |\langle x_i , e_1 \rangle| ^2,
\label{eq:semi}
\end{equation*}
and let $E^{N} \cong \mathbb{C}^{N}$ denote the subspace
\begin{equation*}
E^{N} := \left\{ \x \in \mathbb{C}^{2N} : |\x|_2 = 0 \right\} = \ker(U).
\end{equation*}
Premultiplying by $\x^*$, the fact that $\x^* U \x = |\x|^{2}_2$, the normalization on $\x$ and the
quadratic formula combine to imply the key identity
\begin{equation}
\lambda = -\beta |\x|^2_2 \pm \sqrt{ \beta^2|\x|^4_2 +
\x^*\mathbf{M}\x}. \label{eq:lambda}
\end{equation}

As $\mathbf{M}$ is symmetric, we may write its $2N$ real eigenvalues and corresponding normalized
($\x^*\x=1$) eigenvectors as
\begin{equation*}
\mu_{2N} \leq \mu_{2N-1} \leq \cdots \leq \mu_2 \leq \mu_{1} \qquad
\mathbf{M} \x_{i} = \mu_{i} \x_i.
\end{equation*}
The notation $a_L(\lambda)$, $a_\mathbf{M}(\mu)$ will denote the algebraic
multiplicities of $\lambda,\mu$ as eigenvalues of their respective
matrices. The bulk of the analysis lies in characterizing the
eigenvalues $\lambda$ of the full system \eqref{eq:eig} that have
$\Re(\lambda) = 0$.
\begin{lemma}
\label{lem:nonzeros} Let $\lambda$ denote an eigenvalue of
\eqref{eq:eig}. Then $\Re(\lambda) = 0$ and $\Im(\lambda) \neq 0$
if and only if $\lambda = \pm i \sqrt{-\mu_{k}}$ for some $k$ with $\mu_k
< 0$ and $\x_k \in E^{N}$. The eigenspace
consists only of eigenvectors.
\end{lemma}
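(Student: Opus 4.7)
The plan exploits the key identity
$$\lambda = -\beta|\x|_2^2 \pm \sqrt{\beta^2|\x|_2^4 + \x^*\mathbf{M}\x}$$
derived in \eqref{eq:lambda}. Because $\mathbf{M}$ is real symmetric, $\x^*\mathbf{M}\x$ is real for every $\x\in\mathbb{C}^{2N}$, so the discriminant under the square root is real. If that discriminant were nonnegative, $\lambda$ would be real, contradicting $\Im(\lambda)\ne 0$; hence the discriminant must be strictly negative, the square root is purely imaginary, and $\Re(\lambda) = -\beta|\x|_2^2$.

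For the forward direction, imposing $\Re(\lambda)=0$ forces $|\x|_2 = 0$, i.e.\ $\x\in E^N = \ker U$. Substituting $U\x=0$ into \eqref{eq:eivec} yields $\mathbf{M}\x = \lambda^2 \x$, so $\x$ is a genuine eigenvector of $\mathbf{M}$ with eigenvalue $\mu := \lambda^2$. Since $\lambda$ is purely imaginary and nonzero, $\mu<0$, so $\mu=\mu_k$ for some $k$ with $\mu_k<0$, and $\lambda=\pm i\sqrt{-\mu_k}$. The reverse implication is a direct verification: given $\mu_k<0$ with $\mathbf{M}\x_k=\mu_k\x_k$ and $\x_k\in E^N$, the pair $(\x_k,\lambda\x_k)$ with $\lambda=\pm i\sqrt{-\mu_k}$ satisfies \eqref{eq:eig}, since $U\x_k=0$ and $\lambda^2=\mu_k$.

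The main obstacle is the last assertion that the generalized eigenspace of $\lambda$ coincides with its eigenspace, i.e.\ no Jordan block of size at least two appears. I would argue by contradiction. A rank-two generalized eigenvector $(\x,\vv)$ at $\lambda$ satisfies $(L-\lambda I)(\x,\vv) = (\x_0,\lambda\x_0)$ for some genuine eigenvector, so $\x_0\in\ker U\cap\ker(\mathbf{M}-\mu_k I)$. Eliminating $\vv$ via the first block row produces
$$\mathbf{M}\x - 2\beta\lambda U\x - \lambda^2\x = 2\lambda\x_0.$$
Premultiplying by $\x_0^*$ and using $\mathbf{M}^*=\mathbf{M}$ with $\mathbf{M}\x_0=\mu_k\x_0$, $U^*=U$ with $U\x_0=0$, and $\lambda^2=\mu_k$, the left-hand side collapses to zero while the right-hand side equals $2\lambda\,\x_0^*\x_0\ne 0$, yielding the contradiction. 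This Fredholm-style solvability check is the crux of the argument; the remainder of the lemma is extracted from \eqref{eq:lambda} together with \eqref{eq:eivec}.
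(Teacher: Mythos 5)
Your proposal is correct and follows essentially the same route as the paper: both directions are read off from the identity \eqref{eq:lambda} combined with \eqref{eq:eivec}, and the absence of generalized eigenvectors is ruled out by the same Fredholm-style computation (eliminate the velocity component, premultiply by the conjugate transpose of the eigenvector, and use the symmetry of $\mathbf{M}$ and $U$ together with $\lambda^2=\mu_k$ to force $\lambda=0$). The only difference is cosmetic notation, so nothing further is needed.
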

\begin{proof}
If $\x_k \in E^N$ then \eqref{eq:eivec} reads
$\lambda^2 \x_k = \mathbf{M} \x_k$, or equivalently $\lambda^2 =
\mu_k.$
To have $\Im(\lambda) \neq 0$ then requires $\mu_k < 0$.
Conversely, if $(\x,\lambda \x)$ denotes an eigenvector with
$\Re(\lambda) = 0$ and $\Im(\lambda) \neq 0$, the formula
\eqref{eq:lambda} implies that necessarily $\x \in E^N$,
and therefore $\mathbf{M}\x = \lambda^2 \x$. Thus $\lambda^2 = \mu_k$ for
some $\mu_k < 0$.

To show the last statement, suppose a generalized eigenvector
existed that is not an eigenvector. Then there exists an
eigenvector $(\x,\lambda \x)$ with $\x \in E^N$ so that
the system of equations
\begin{equation}
\begin{pmatrix}
-\lambda \Id && \Id \\
\mathbf{M} && -2\beta U - \lambda \Id
\end{pmatrix}
\begin{pmatrix}
\uu \\ \ww
\end{pmatrix}
=
\begin{pmatrix}
\x \\ \lambda \x
\end{pmatrix}
\label{eq:gen_eivec}
\end{equation}
has a non-trivial solution. Substituting the first equation $\ww =
\lambda \uu + \x$ into the second equation, then pre-multiplying
by $\x^*$ demonstrates
\begin{align*}
\mathbf{M} \uu - 2\beta U \ww &= 2\lambda \x + \lambda^2\uu \\
\x^*\mathbf{M}\uu &= 2\lambda + \lambda^2 \x^*\uu.
\end{align*}
The last line follows as $\x^*\x = 1$ and $\x \in E^N = \ker(U)$.
The symmetry of $\mathbf{M}$ and the fact that $\mathbf{M}\x = \lambda^2 \x$ combine
to show $\x^*\mathbf{M}\uu = \lambda^2 \x^*\uu$. Thus $\lambda = 0$, leading to a contradiction.
\end{proof}
\begin{lemma}\label{lem:gen_eivecs}
Let $\beta > 0$. Then $\lambda = 0$ is an eigenvalue of
\eqref{eq:eig} and $(\x,\0)$ is a corresponding eigenvector if and
only if $\mathbf{M}\x = \0$. If $\x \in E^N$ then $(\x,\0)$
generates a single generalized eigenvector, whereas if $\x \notin
E^N$ then $(\x,\0)$ generates no generalized
eigenvectors.
\end{lemma}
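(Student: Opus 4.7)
The plan is to exploit the block structure of $L$ together with the symmetry of $\mathbf{M}$ and the identity $\x^{*}U\x=|\x|_{2}^{2}$ recorded just before Lemma~\ref{lem:nonzeros}. Both halves of the statement will reduce, after a single pre-multiplication by $\x^{*}$, to checking whether one scalar obstruction vanishes.

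For the eigenvector characterization I would simply expand $L(\x,\0)^{T}$ block by block, obtaining $(\0,\mathbf{M}\x)^{T}$. Hence $(\x,\0)$ is a null vector of $L$ precisely when $\mathbf{M}\x=\0$, with no further work.

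For the generalized-eigenvector count I would set up the rank-two Jordan chain equation $L(\uu,\ww)^{T}=(\x,\0)^{T}$ in the same spirit as \eqref{eq:gen_eivec}. Block expansion yields $\ww=\x$ together with the single condition $\mathbf{M}\uu=2\beta U\x$. When $\x\in E^{N}=\ker(U)$ the right-hand side vanishes, so the choice $\uu=\0$ already produces a genuine rank-two generalized eigenvector $(\0,\x)$. To show that the chain stops there I would attempt to extend by one further rank and solve $L(\uu',\ww')^{T}=(\uu,\x)^{T}$, which forces $\ww'=\uu$ and $\mathbf{M}\uu'=\x+2\beta U\uu$. Pre-multiplying by $\x^{*}$, the left side collapses via the symmetry of $\mathbf{M}$ and $\mathbf{M}\x=\0$ to $(\mathbf{M}\x)^{*}\uu'=0$, while on the right $\x^{*}\x=1$ and $\x^{*}U\uu=(U\x)^{*}\uu=0$ since $\x\in\ker(U)$; this leaves $0=1$, a contradiction. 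Hence the Jordan chain terminates exactly at rank two, and the freedom to vary $\uu$ within $\ker(\mathbf{M})$ merely reflects the standard ambiguity of adding an eigenvector.

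When $\x\notin E^{N}$, the same pre-multiplication trick applied directly to $\mathbf{M}\uu=2\beta U\x$ produces $0=\x^{*}\mathbf{M}\uu=2\beta\x^{*}U\x=2\beta|\x|_{2}^{2}>0$, which is impossible, so no generalized eigenvector exists. I do not foresee any real obstacle in the argument; the only point requiring care in the write-up is to phrase ``generates a single generalized eigenvector'' as the statement that the associated Jordan block of $L$ at $\lambda=0$ has size exactly two in this case and size one otherwise.
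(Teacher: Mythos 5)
Your proposal is correct and follows essentially the same route as the paper: block expansion of the Jordan-chain equations at $\lambda=0$, followed by premultiplication by $\x^{*}$ and use of the symmetry of $\mathbf{M}$ together with $\x^{*}U\x=|\x|_{2}^{2}$ to detect the obstruction. The only cosmetic difference is that you take $\uu=\0$ for the rank-two vector and keep $\uu$ general in the termination step, whereas the paper takes $\uu=\x$ without loss of generality.
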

\begin{proof}
The first statement follows trivially from \eqref{eq:eivec}. To
see the second statement, consider the system of equations
\eqref{eq:gen_eivec} with $\lambda = 0$. This reduces to the
equations $\ww = \x$ and
\begin{equation*}
\mathbf{M}\uu = 2\beta U\x,
\end{equation*}
which by premultiplying by $\x^*$ as before and using the fact
that $\mathbf{M}\x = \0$ necessitates $\x \in E^N$ as $\beta > 0$.
If indeed $\x \in E^N$ then any $\uu \in \ker(\mathbf{M})$
suffices. Without loss of generality, take $\uu = \x$ itself. If
$(\x,\0)$ generates a second generalized eigenvector then the
system of equations

\begin{equation*}
\begin{pmatrix}
0 && \Id \\
\mathbf{M} && -2\beta U
\end{pmatrix}
\begin{pmatrix}
\uu \\ \ww
\end{pmatrix}
=
\begin{pmatrix}
\x \\ \x
\end{pmatrix}
\end{equation*}
has a non-trivial solution. As then $\ww = \x$ and $\x \in
E^N$ this reads $\mathbf{M}\uu = \x$. Premultiplying one last time
by $\x$, the facts that $\mathbf{M}\x = \0$ and $\x^*\x = 1$ combine to
produce the contradiction $0 = 1$.
\end{proof}
This lemma yields, as a corollary, the algebraic multiplicity
$a_L(0)$ of zero as an eigenvalue of the second order system.
\begin{corollary}
Let $\beta > 0$. Then
\begin{equation*}
a_L(0) = \dim( \ker(\mathbf{M}) \cap E^N ) + \dim( \ker(\mathbf{M}) ).
\end{equation*}
\end{corollary}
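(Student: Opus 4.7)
The plan is to extract the corollary directly from the Jordan block structure spelled out in Lemma~\ref{lem:gen_eivecs}. Recall that for any matrix, the algebraic multiplicity of an eigenvalue equals the sum of the lengths of the Jordan chains at that eigenvalue, i.e.\ the sum over a basis of eigenvectors of the size of the Jordan block each one sits inside. So I would first read off the geometric multiplicity of $\lambda=0$: by the first statement of Lemma~\ref{lem:gen_eivecs}, the eigenvectors at $\lambda=0$ are exactly the pairs $(\x,\0)$ with $\x\in\ker(\mathbf{M})$, and this eigenspace therefore has dimension $\dim\ker(\mathbf{M})$.

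Next, I would split $\ker(\mathbf{M})$ as a direct sum of $\ker(\mathbf{M})\cap E^{N}$ with any complementary subspace $V\subset\ker(\mathbf{M})$ transverse to $E^{N}$, and choose an adapted basis $\{\x_i\}_{i=1}^{d_1}\cup\{\x_j\}_{j=d_1+1}^{d_2}$ of $\ker(\mathbf{M})$ with the first $d_1:=\dim(\ker(\mathbf{M})\cap E^{N})$ vectors lying in $E^{N}$ and the remaining $d_2-d_1$ vectors lying outside $E^{N}$, where $d_2:=\dim\ker(\mathbf{M})$. The second part of Lemma~\ref{lem:gen_eivecs} tells us that each of the first $d_1$ eigenvectors admits exactly one genuine rank-two generalized eigenvector above it (Jordan chain length $2$), while each of the remaining $d_2-d_1$ eigenvectors admits none (Jordan chain length $1$).

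Summing the chain lengths would then give
\begin{equation*}
a_L(0)\;=\;2\,d_1+1\cdot(d_2-d_1)\;=\;d_1+d_2\;=\;\dim(\ker(\mathbf{M})\cap E^{N})+\dim\ker(\mathbf{M}),
\end{equation*}
which is the asserted identity.

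The only delicate point, and the one step I would take some care over, is the verification that the chains built above $d_1$ linearly independent choices of $\x\in\ker(\mathbf{M})\cap E^{N}$ are themselves linearly independent in $\mathbb{C}^{4N}$, so that they contribute $d_1$ linearly independent rank-two generalized eigenvectors. Since the construction in Lemma~\ref{lem:gen_eivecs} simply picks $\uu=\x$ and $\ww=\x$, the map sending an eigenvector $(\x,\0)$ to its generalized partner $(\x,\x)$ is linear and injective on $\ker(\mathbf{M})\cap E^N$; combined with the exclusion of a third rung already proved, this shows the Jordan block above each such $\x$ has size exactly two and no chains merge. Once this bookkeeping is in place the corollary follows immediately from the chain-length counting above.
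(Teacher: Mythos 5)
Your proposal is correct and follows exactly the route the paper intends: the paper states this corollary as an immediate consequence of Lemma~\ref{lem:gen_eivecs} without writing out a proof, and your Jordan-chain bookkeeping (geometric multiplicity $\dim\ker(\mathbf{M})$, one extra rung for each basis vector of $\ker(\mathbf{M})\cap E^{N}$, no third rungs) is precisely the omitted argument. Your extra care about the linear independence of the chains $(\x,\0)\mapsto(\x,\x)$ is a welcome addition rather than a deviation.
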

Let $a_{\mathbf{M},\perp}(0) := \dim( \ker(\mathbf{M}) \cap E^N )$, so that
$a_L(0) = a_{\mathbf{M},\perp}(0) + a_\mathbf{M}(0)$. Note that neither quantity
depends on $\beta$, and the conclusion holds whenever $\beta$ is
positive. Thus, if $\beta \in (0,\infty)$ it follows that
$a_{L}(0)$ is constant. Moreover, Lemma~\ref{lem:nonzeros} holds
uniformly in $\beta$ as well. Let $i_{1} < i_{2} < \cdots < i_{k}
\leq 2N$ denote those (possibly non-existent) indices where
$\mu_{i_j} < 0$ has an eigenvector $\x_{i_j} \in E^N$.
The two lemmas then combine to show:
\begin{corollary}
\label{cor:char_poly} Let $\beta > 0$. Then
\begin{equation*}
\det(L - \lambda \Id) = \lambda^{a_{\mathbf{M},\perp}(0) + a_\mathbf{M}(0)}
\Pi^{k}_{j=1} (\lambda^2 - \mu_{i_j})p_{\beta}(\lambda).
\label{eq:char_poly}
\end{equation*}
The roots of the polynomial $p_{\beta}(\lambda)$ all have non-zero
real part.
\end{corollary}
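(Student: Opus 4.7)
The plan is to identify every eigenvalue of $L$ with $\Re(\lambda)=0$ via the two preceding lemmas, read off its algebraic multiplicity, and factor the corresponding contributions out of $\det(L-\lambda\Id)$. Since $L$ is $4N\times 4N$, its characteristic polynomial has degree $4N$ and factors over $\mathbb{C}$ according to algebraic multiplicities, so the claim that every root of $p_\beta(\lambda)$ has nonzero real part will follow automatically once the zero and purely imaginary eigenvalues are correctly accounted for.

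For $\lambda = 0$, the preceding corollary already yields $a_L(0) = a_{\mathbf{M},\perp}(0) + a_\mathbf{M}(0)$, producing the factor $\lambda^{a_{\mathbf{M},\perp}(0)+a_\mathbf{M}(0)}$. For nonzero purely imaginary eigenvalues, Lemma~\ref{lem:nonzeros} states that they are exactly the values $\pm i\sqrt{-\mu_{i_j}}$ at the prescribed indices and that the associated generalized eigenspace reduces to the eigenspace, so algebraic and geometric multiplicities coincide. For each index $i_j$, the eigenvector $\x_{i_j} \in E^N$ lifts to the two eigenvectors $(\x_{i_j}, \pm i\sqrt{-\mu_{i_j}}\,\x_{i_j})$ of $L$, which together contribute a pair of factors $(\lambda - i\sqrt{-\mu_{i_j}})(\lambda + i\sqrt{-\mu_{i_j}}) = \lambda^2 - \mu_{i_j}$. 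Taking the product over all $k$ indices and defining $p_\beta(\lambda)$ as the quotient yields the displayed factorization.

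The delicate point, and the only real obstacle, is checking that enumerating the indices $i_1 < \cdots < i_k$ with multiplicity recovers the correct algebraic multiplicity of $\pm i\sqrt{-\mu}$ in the case when $\mathbf{M}$ has a repeated negative eigenvalue $\mu$ with several independent eigenvectors in $E^N$. This is handled by observing that for any fixed $\mu<0$, the number of indices with $\mu_{i_j}=\mu$ is $\dim(\ker(\mathbf{M}-\mu\Id)\cap E^N)$, which by Lemma~\ref{lem:nonzeros} agrees with the (geometric, hence algebraic) multiplicity of each of $\pm i\sqrt{-\mu}$ as an eigenvalue of $L$. Since Lemmas~\ref{lem:nonzeros} and~\ref{lem:gen_eivecs} together exhaust every eigenvalue of $L$ with $\Re(\lambda) = 0$, the quotient $p_\beta(\lambda)$ must have all of its roots in $\{\lambda \in \mathbb{C} : \Re(\lambda)\neq 0\}$, as claimed.
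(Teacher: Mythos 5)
Your proof is correct and follows the same route the paper intends: the corollary is stated there without a written proof, as an immediate consequence of Lemma~\ref{lem:nonzeros} (purely imaginary eigenvalues are semisimple and come from $\ker(\mathbf{M}-\mu\Id)\cap E^N$ with $\mu<0$), Lemma~\ref{lem:gen_eivecs} and the count $a_L(0)=a_{\mathbf{M},\perp}(0)+a_{\mathbf{M}}(0)$, which is exactly what you assemble into the factorization. Your explicit observation that, when $\mathbf{M}$ has a repeated negative eigenvalue $\mu$, the number of indices $i_j$ with $\mu_{i_j}=\mu$ must be read as $\dim(\ker(\mathbf{M}-\mu\Id)\cap E^N)$ is a welcome precision that the paper leaves implicit.
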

This corollary, along with the formula \eqref{eq:lambda}, suffice
to establish the desired result:
\begin{theorem}\label{thm:instability1}
The linearized second order system around the flock ring solution
\eqref{model} has an eigenvalue with positive real part if and
only if the linearized first order system around the ring solution
has a positive eigenvalue. As a consequence, the flock ring
solution is unstable for $m$-mode perturbations for the second
order model \eqref{model} if and only if the ring solution is
unstable for $m$-mode perturbations for the first order model
\eqref{eq:firstorder}.
\end{theorem}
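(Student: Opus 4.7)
The plan is to prove the iff in two directions: the reverse direction using the Rayleigh-style formula \eqref{eq:lambda} directly, and the forward direction via an intermediate value argument applied to the quadratic matrix pencil $T(\lambda) := \lambda^2 \Id + 2\beta\lambda U - \mathbf{M}$.

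For the reverse direction I would fix an eigenvalue $\lambda$ of $L$ with $\Re(\lambda) > 0$ and a corresponding normalized eigenvector $(\x,\lambda\x)$, then read off from \eqref{eq:lambda} that $\lambda = -\beta|\x|_2^2 \pm \sqrt{\beta^2|\x|_2^4 + \x^*\mathbf{M}\x}$. Since $-\beta|\x|_2^2 \le 0$, positive real part forces the $+$ branch and a real square root strictly exceeding $\beta|\x|_2^2$, which in turn forces the radicand to be real, positive, and larger than $\beta^2|\x|_2^4$. This is possible only if $\x^*\mathbf{M}\x > 0$ (note that $\x^*\mathbf{M}\x \in \R$ by symmetry of $\mathbf{M}$), and the variational characterization $\mu_1 = \max_{\x^*\x=1}\x^*\mathbf{M}\x$ then guarantees that $\mathbf{M}$ has a positive eigenvalue.

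For the forward direction I would first record the block-determinant identity $\det(L-\lambda\Id) = \det T(\lambda)$, so that the eigenvalues of $L$ are exactly the zeros of $\lambda \mapsto \det T(\lambda)$. For $\lambda \in \R$, the matrix $T(\lambda)$ is real symmetric, hence its smallest eigenvalue $\ell_{\min}(\lambda)$ depends continuously on $\lambda$. I would then track $\ell_{\min}$ on $[0,\infty)$: at $\lambda = 0$ it equals $-\mu_1 < 0$ by assumption, while the positive semidefiniteness of $U$ (block-diagonal with rank-one PSD blocks $u_0 u_0^T$) gives $\x^* T(\lambda)\x \geq \lambda^2 - \mu_1$ for any unit $\x$ and $\lambda \geq 0$, so that $T(\lambda)$ is positive definite and $\ell_{\min}(\lambda) > 0$ for $\lambda > \sqrt{\mu_1}$. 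Invoking the intermediate value theorem then yields some $\lambda^* \in (0,\sqrt{\mu_1}\,]$ with $\ell_{\min}(\lambda^*) = 0$, giving $\det T(\lambda^*) = 0$ and hence a positive real eigenvalue of $L$.

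The $m$-mode consequence in the second sentence of the theorem should then follow automatically: the ansatz \eqref{eq:ansatz} identifies a four-dimensional invariant subspace of the linearization on which $L$ restricts to the system \eqref{eq:bigmatrix1}, so the same two arguments go through verbatim with $\mathbf{M}$ replaced by the $2 \times 2$ matrix $M$ defined there. I expect the forward direction to be the main obstacle, since the naive guess $(\x_1,\sqrt{\mu_1}\,\x_1)$ for an eigenvector of $L$ works only when $\x_1 \in E^N = \ker U$; the IVT argument circumvents this by producing the eigenvalue $\lambda^*$ without first exhibiting an eigenvector, relying only on the symmetry of $\mathbf{M}$ and the positive semidefiniteness of $U$.
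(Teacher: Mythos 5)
Your proposal is correct, and the reverse direction (no positive $\mu_1$ implies no eigenvalue of $L$ with positive real part, argued contrapositively through the identity \eqref{eq:lambda}) is exactly the paper's first step. The forward direction, however, is where you genuinely diverge. The paper proves it by a connectedness argument in the friction parameter: it defines $\mathcal{A} = \{\beta \ge 0 : \max_{\lambda\in\sigma(L)}\Re(\lambda)>0\}$, notes $0\in\mathcal{A}$ because at $\beta=0$ the pair $(\x_1,\sqrt{\mu_1}\x_1)$ is an honest eigenvector, and shows $\mathcal{A}$ is both open and closed in $[0,\infty)$; the closedness step is the delicate one and leans on the characteristic-polynomial factorization of Corollary~\ref{cor:char_poly}, which in turn requires Lemmas~\ref{lem:nonzeros} and~\ref{lem:gen_eivecs} to rule out roots of $p_\beta$ on the imaginary axis. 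Your intermediate-value argument on $\ell_{\min}(\lambda)$ for the real symmetric pencil $T(\lambda)=\lambda^2\Id+2\beta\lambda U-\mathbf{M}$ reaches the same conclusion more directly: the determinant identity $\det(L-\lambda\Id)=\det T(\lambda)$ is legitimate here because the top-left block of $L-\lambda\Id$ is a scalar multiple of the identity, $\ell_{\min}(0)=-\mu_1<0$, and $\x^*T(\lambda)\x\ge\lambda^2-\mu_1$ forces $\ell_{\min}>0$ beyond $\sqrt{\mu_1}$, so a zero crossing in $(0,\sqrt{\mu_1}]$ is unavoidable. This bypasses the two lemmas and the homotopy in $\beta$ entirely, and even yields the sharper statement that $L$ has a positive \emph{real} eigenvalue bounded by $\sqrt{\mu_1}$. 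What the paper's longer route buys is the precise description of the spectrum on the imaginary axis (multiplicity of the zero eigenvalue, the generalized eigenvector coming from translation invariance), which it needs for Remark~\ref{cor:stability12} and the Cucker--Smale variant, so the lemmas are not wasted even though your proof shows they are not needed for Theorem~\ref{thm:instability1} itself. Your treatment of the $m$-mode statement, restricting to the $4\times4$ system \eqref{eq:bigmatrix1} with $\mathbf{M}$ replaced by the symmetric $2\times2$ block $M$ and $U$ by $u_0u_0^T$, matches the paper's closing sentence.
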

\begin{proof}
Suppose first that $\mu_1 \leq 0$. Then $\x^*\mathbf{M}\x \leq 0$ for any
$\x$, whence all eigenvalues $\lambda$ of $L$ have non-positive
real part due to \eqref{eq:lambda}. Conversely, suppose $\mu_1 >
0$ and let $\mathcal{A}$ denote the set
\begin{equation*}
\mathcal{A} := \left\{ \beta \in [0,\infty) : \underset{  \lambda
\in \sigma(L)  }{\max} \Re(\lambda) > 0 \right\}.
\end{equation*}
Note that $0 \in \mathcal{A}$ due to \eqref{eq:lambda}. Indeed,
then $(\x_1,\sqrt{\mu_1}\x_1)$ defines an eigenvector with
eigenvalue $\lambda = \sqrt{\mu_1} > 0$. By continuous dependence
of the eigenvalues of $L$ on $\beta$, it follows that
$\mathcal{A}$ is relatively open. To show that it is also
relatively closed, let $\beta_{l} \in \mathcal{A}$ and $\beta_{l}
\rightarrow \beta_{0} \in (0,\infty)$. Up to extraction of
subsequences, it follows that there exists a corresponding
sequence $\lambda_{l}$ of eigenvalues with $\Re(\lambda_l) > 0$
converging to some $\lambda_{0}$ with $\Re(\lambda_0) \geq 0$.
Moreover, by continuous dependence of the coefficients of
$p_{\beta}(\lambda)$ on $\beta$, the roots of
$p_{\beta_l}(\lambda)$ converge to roots of
$p_{\beta_0}(\lambda)$. Thus $p_{\beta_0}(\lambda_0) = 0$. As no
such root can have zero real part by corollary
\ref{cor:char_poly}, $\Re(\lambda_0) > 0$ and $\beta_0 \in
\mathcal{A}$. As $\mathcal{A} \neq \emptyset$ it follows that
$\mathcal{A} = [0,\infty)$ as desired. The last part of the
theorem is a direct application of the first part to the $4x4$
$m$-mode perturbation matrix in \eqref{eq:bigmatrix1}.
\end{proof}
\begin{remark}\label{cor:stability12}
As an artifact of translation invariance in the first order model, the vector defined by
$\mathbf{e}_{2} := (0,1,\ldots,0,1)^{T} \in \mathbb{R}^{2N}$
always defines an eigenvector of $\mathbf{M}$ with eigenvalue
zero. Due to the fact that $\mathbf{e}_2 \in E^N,$ Lemma {\rm
\ref{lem:gen_eivecs}} implies that $(\mathbf{e}_2,\mathbf{e}_2)$
furnishes a generalized eigenvector with eigenvalue zero, so that
the flock is always linearly unstable for the model \eqref{model}.
\end{remark}

\subsection{Numerical validations}\label{sec:NumFlock}
In this section, we perform some numerical computations to show
stability regions for the flock ring. Moreover, we will show the
formation of clusters and the fattening instability. Due to
Theorem~\ref{thm:instability1} and
Corollary~\ref{cor:stability12}, applied to the $4\times 4$ matrix
in \eqref{eq:bigmatrix1}, we are reduced to study the determinant
of the matrix $M$ for the flock stability. Note that for fixed
values of $N$ and $m$ the determinant of $M$ is a function of the
parameters $a$ and $b$ and such that
\begin{align*}
D(a,b):=&\det(M)=I_1(m)I_1(-m)-(I_2(m))^2,\\
T(a,b):=&{\rm trace}(M)=I_1(m)+I_1(-m).
\end{align*}
\begin{remark}
Using the results of {\rm \cite[Theorem 3.1]{BUKB2}} one is able to
estimate the asymptotic value of the determinant of $M$.
In our case, using $W(x)=\frac{|x|^a}{a}-\frac{|x|^b}{b}$ one obtains that
\[
    \det(M)\sim C m^{-b+1} \quad\text{as}\quad m\to\infty,
\]
where $C>0$ and $b\in(1,2)\cup(4,6)\cup(8,10)\cup\cdots$. In these
cases $\det(M)>0$ and ${\rm trace}(M)<0$.
Moreover, this result shows that there is no spectral gap for
large modes $m$ since $\det(M)\to0$ as $m\to\infty$.
\end{remark}
In Figure~\ref{fig:detmodes} we recover some results on the
stability already shown in \cite{KSUB} . Since $I_1(m)$ and
$I_2(m)$ depend on the powers $a$ and $b$ of the power law
potential, we plot in the parameter region $\lbrace (a,b)\,:\,
a>b>0\rbrace$ the stability and instability regions depending on
the determinant of $M$ and its trace. We show the cases $m=3, 4,
5$ for a fixed $N=100000$.
\begin{figure}[ht]
\centering
\includegraphics[scale=0.29]{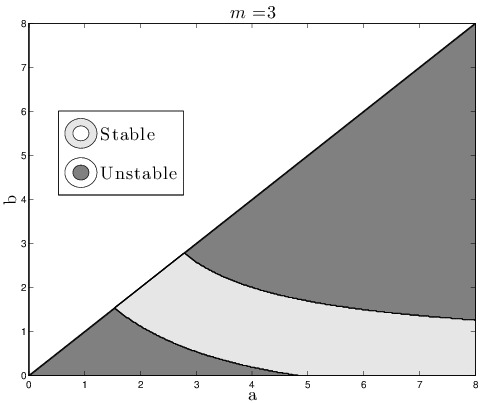}
\includegraphics[scale=0.29]{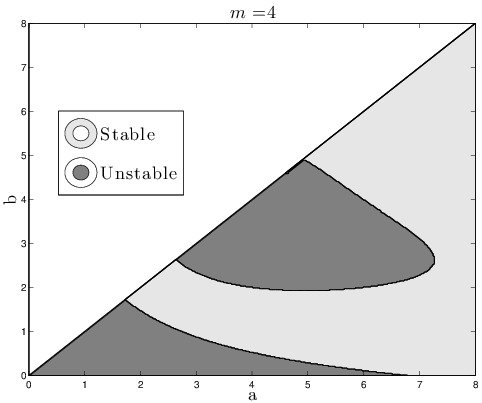}
\includegraphics[scale=0.29]{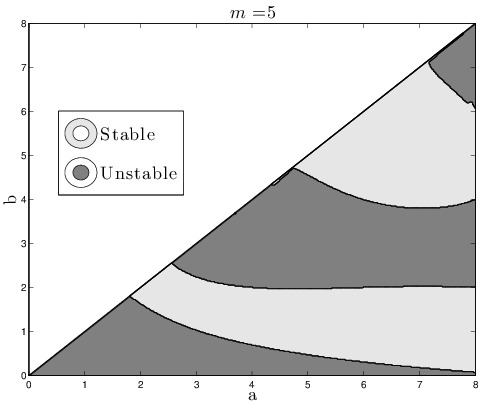}
\hfill \caption{Stability regions for different perturbations
modes $m=3, 4, 5$. Stability regions correspond to a couple $(a,b)$ of
parameters where $D(a,b)>0,\quad T(a,b)<0$.}
\label{fig:detmodes}
\end{figure}
In Figure~\ref{fig:stabreg} we compute the stability
area as a function of $a$ and $b$. To do so, we compute the
intersection of all stability areas for $m\geq 2$. It can be
observed from our tests that the stability area shrinks when the
number of particles increases. Moreover, it is observed that in
the limit when $N\to\infty$, the lower boundary of the stability
region converges to the dashed line. The red dashed line is the
curve $b=\frac{a}{a-1}$ that corresponds to the $m=+\infty$ mode.
This curve is the separatrix of the ins/stability regions for the
continuous delta ring of the first order continuum model, studied
in \cite{KSUB,BCLR}.
\begin{figure}[h!t]
\centering
\includegraphics[scale=0.29]{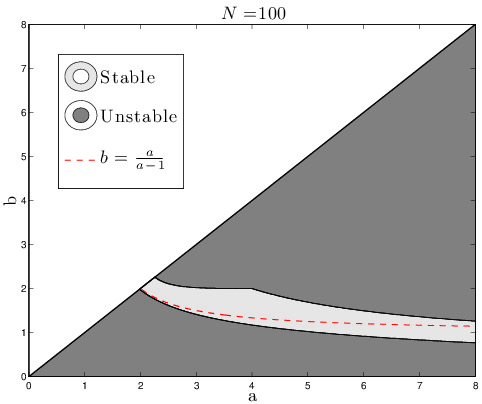}
\includegraphics[scale=0.29]{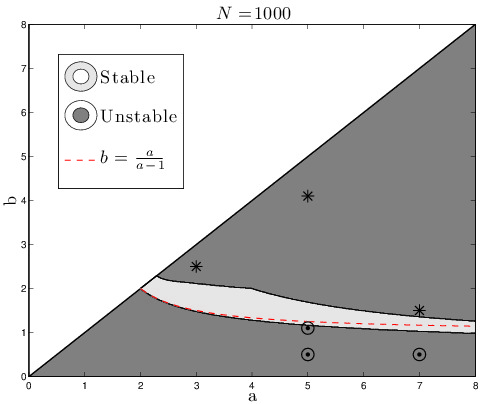}
\includegraphics[scale=0.29]{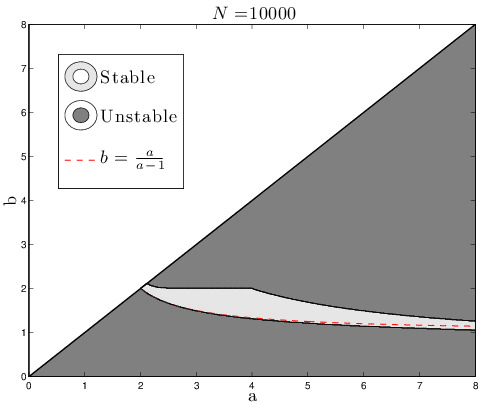}\hfill
\caption{Stability areas for flock ring solutions for different
values of $N$. From left to right: $N=100, 1000,
10000$. Markers $(\bigast)$ and $(\odot)$ indicate the explored parameters respectively in Table~\ref{tab:FlockClustering} and Table~\ref{tab:exfatflocks}.}
\label{fig:stabreg}
\end{figure}

\subsubsection{Cluster formation}
The formation of clusters occurs when the repulsion strength is
small. In other words, this phenomenon depends on how
singular the potential is at the origin. We show the bifurcation
diagram for the phase transition between equally distributed
flock and flock with cluster formation.
\begin{figure}[h!t!]
\centering
\includegraphics[scale=0.5]{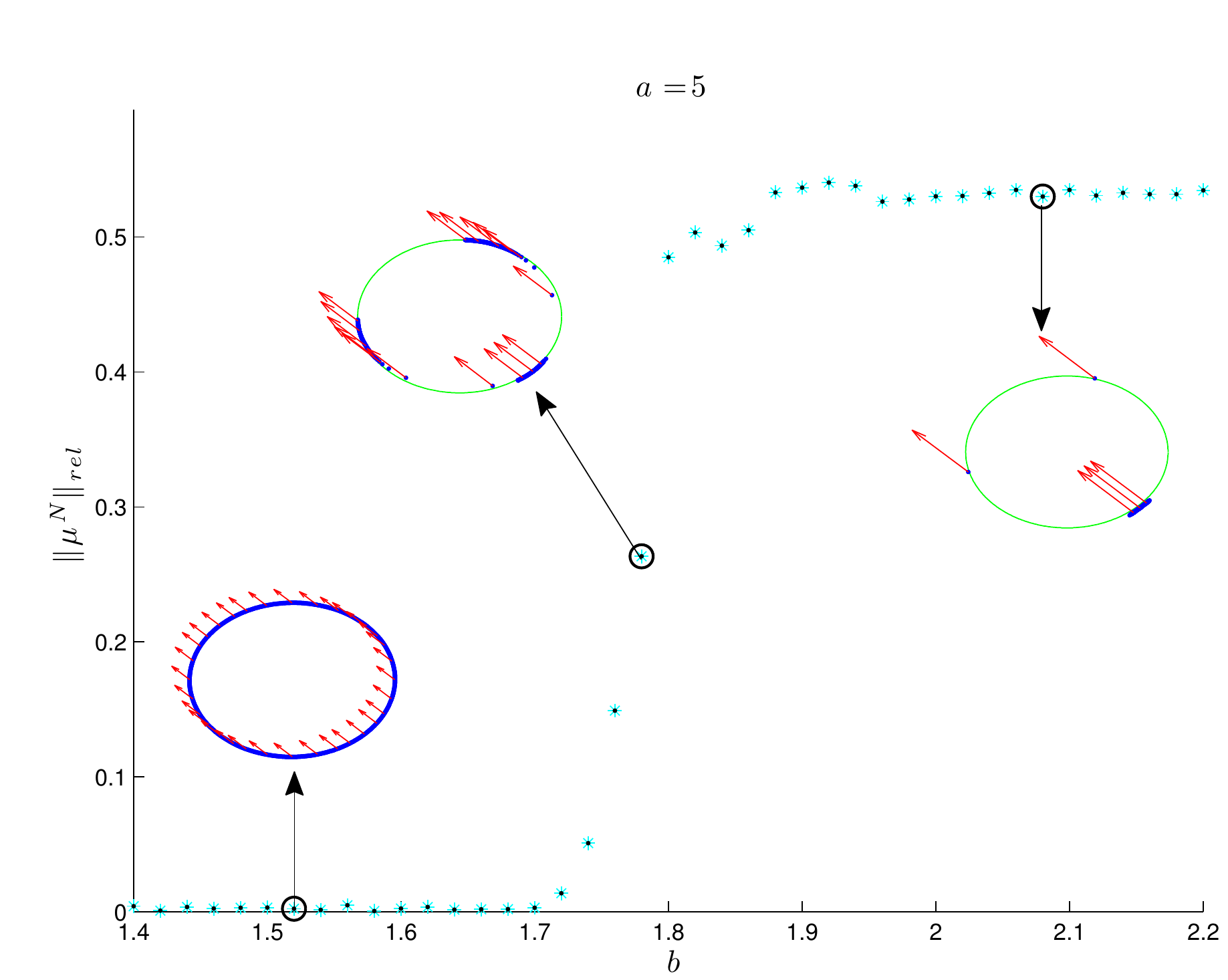}
\caption{Bifurcation diagram for cluster formation at $T_f=500$,
with  $N=1000$ particles, $a=5$, $|u_0|=2.5$.}
\label{fig:FlockClusteringbif}
\end{figure}
Figure~\ref{fig:FlockClusteringbif} is performed using $N=1000$
particles equally distributed on the stable circle with all the
velocities aligned. We let them evolve until $T_f=500$. We
fix the parameters $\abs{u_0}=2.5$, $a=5$ and vary $b$ along the
axis. The vertical axis represents the increment of the relative errors
\[
   \| \mu^N\|_{rel}=\frac{\| \mu^N-\mu^N_0 \|_2}{\| \mu^N_0 \|_2}
\]
with increasing $b$, where $\mu_0^N$ is the uniform distribution
along the stable ring of $N$ particles and $\mu^N$ the
distribution at time $T_f$. Simulations are performed with
\textit{MATLAB} and the evolution of the system of odes is solved
with the \textit{ode45} routine with adaptive time step.
Table~\ref{tab:FlockClustering} illustrates different possible
final states for different choices of the parameters $a, b$.
\begin{table}[h!]
\centering
\begin{tabular}{>{\centering}m{3.5cm}>{\centering}m{3.5cm}>{\centering}m{3.5cm}}
 $a=3, b=2.5$ & $a=5, b=4.1$  & $a=7, b=1.5$
\tabularnewline
\tabularnewline
\includegraphics[scale=0.25]{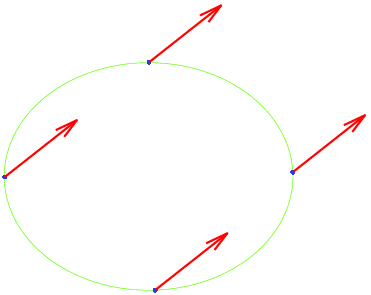}
&
\includegraphics[scale=0.25]{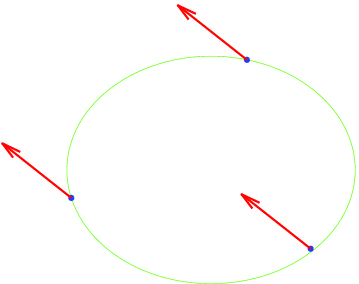}
&
\includegraphics[scale=0.25]{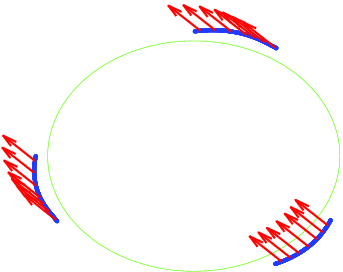}
\tabularnewline
\end{tabular}
\caption{Long time simulations with $N=1000$ particles. The
location of parameter values are marked as $(\bigast)$ points in the central plot of Figure {\rm\ref{fig:stabreg}}.}
\label{tab:FlockClustering}
\end{table}

\subsubsection{Fattening formation}
We show the transition diagram between a flock on a ring and a flock
on an annulus. In this case, the fattening phenomenon occurs when
the parameters of the potential cross the lower boundary of the
stability region. We numerically characterize this behavior in a
similar way as in the previous subsection.

\begin{figure}[h!]
\centering
\includegraphics[scale=0.5]{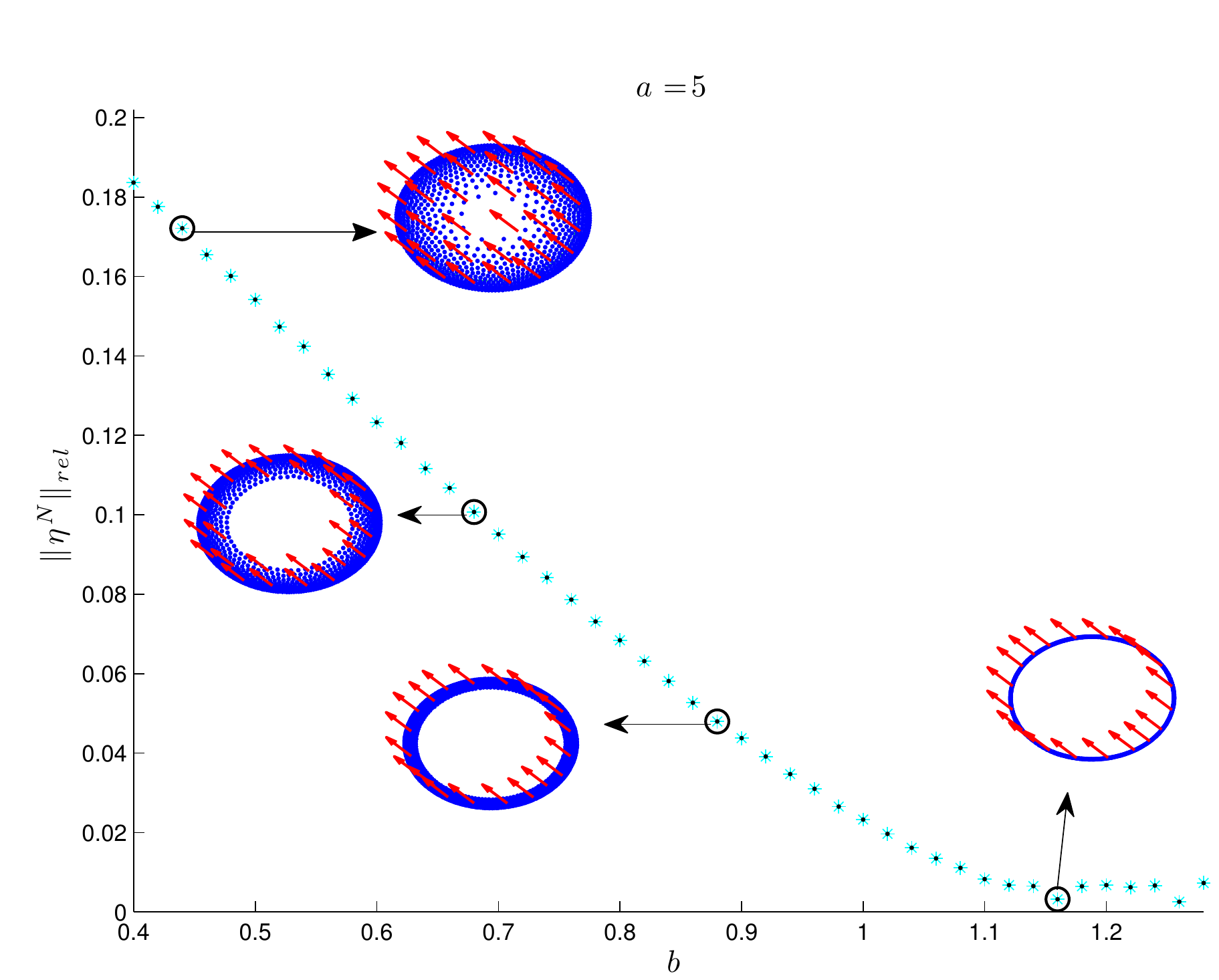}
\caption{Bifurcation diagram for fattening instability at
$T_f=500$ with  N=1000 particles, $a=5$, $|u_0|=2.5$.}
\label{fig:FlockFat}
\end{figure}

Figure~\ref{fig:FlockFat} is performed using $N=1000$ particles
equally distributed on the stable ring already in the steady state
with all the velocities aligned. We then let them evolve until
$T_f=500$. We fix the parameters $\abs{u_0}=2.5$, $a=5$ and vary $b$ along the axis. The vertical axis represents the increment of
the relative errors
\[
   \| \eta^N\|_{rel}=\frac{\| \eta^N-\eta^N_0 \|_2}{\| \eta^N_0 \|_2}
\]
for increasing $b$, where $\eta_0^N$ represents the average
distance from the center of mass for $N$ particles in a flock ring
formation, i.e., $\eta_0^N=R$ and $\eta^N$ is the average distance
from the center of mass at time $T_f$. Table~\ref{tab:exfatflocks}
shows the final states for some particular choices of parameters after
stabilization.
\begin{table}[h!]
\centering
\begin{tabular}{>{\centering}m{3.5cm}>{\centering}m{3.5cm}>{\centering}m{3.5cm}}
$a=5, b=1.1$ & $a=5, b=0.5$ & $a=7, b=0.5$
\tabularnewline
\tabularnewline
\includegraphics[scale=0.25]{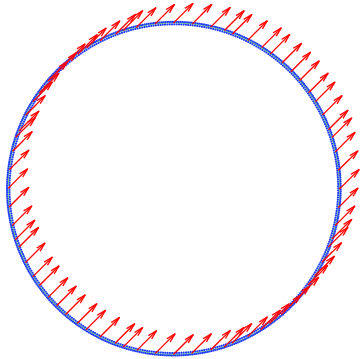}
&
\includegraphics[scale=0.25]{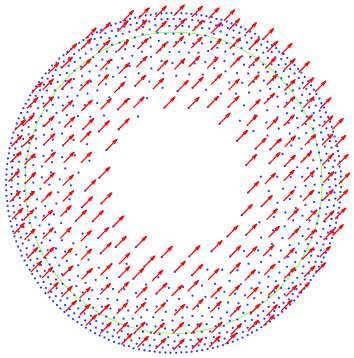}
&
\includegraphics[scale=0.25]{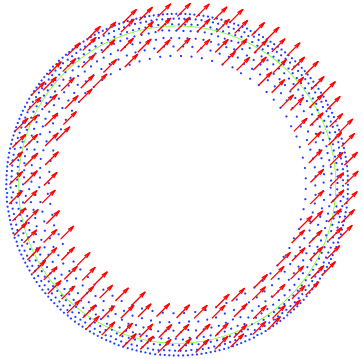}
\end{tabular}
\caption{Long time simulations with $N=1000$ particles. The
location of parameter values are marked as $(\odot)$ points in the
central plot of Figure {\rm\ref{fig:stabreg}}.}
\label{tab:exfatflocks}
\end{table}
\subsection{Stability of flock solutions with the Cucker-Smale alignment term}
As in Section~\ref{sec:nocuckersmale} we perform a study on
the flock stability for model \eqref{model2}. If we use the same change
of variables as in \eqref{eq:changeflock}, then system
(\ref{model2}) reads
\begin{equation}
\begin{cases}
\displaystyle\dot{y}_j=v_j-u_0=z_j
\\
\displaystyle\dot{z}_j=\frac{1}{N}\sum_{l=1}^N
H(\abs{y_l-y_j})(z_l-z_j)+\frac{1}{N}\sum_{\substack{l=1\\l\neq j}}^N
\nabla W(y_l-y_j)
\end{cases},\qquad\qquad\quad j=1,\dots, N
\label{eq:Aflock_noCS}
\end{equation}
We give a characterization of the flock solution in the complex
plane for \eqref{eq:Aflock_noCS} with $(y_{j}^{0},z_{j}^{0})=(Re^{i\theta_j},0),$
where $\theta_j=\frac{2\pi j }{N}$.
We consider then the perturbed solution
\begin{equation*}
\tilde{y}_j(t)=Re^{i\theta_j}(1+h_j(t)),\qquad
\end{equation*}
with $h_j$ such that $|h_j|\ll1$ and satisfying \eqref{eq:perturbationhj}.
Considering the following relations
\begin{align*}
&\tilde{y}_l-\tilde{y}_j=Re^{i\theta_j}\left(e^{\phi_p}h_l-h_j\right),
\\
&|\tilde{y}_l-\tilde{y}_j|\simeq2R\left|\sin\left(\frac{\phi_p}{2}\right)\right|+\frac{R}{4\left|\sin\left(\frac{\phi_p}{2}\right)\right|}\left[\left(1-e^{i\phi_p}\right)\left(h_l-\overline{h_j}\right)+\left(1-e^{-i\phi_p}\right)\left(\overline{h_l}-{h_j}\right)\right],
\\
&\tilde{z}_l-\tilde{z}_j=Re^{i\theta_j}\left(e^{\phi_p}h'_l-h'_j\right),
\end{align*}
where  $\phi_p=2\pi(l-j)/N=2\pi p/N$. We linearize the
Cucker-Smale alignment term around the solution up to first
order, leading to
\begin{align*}
H(|\tilde{y}_l-\tilde{y}_j|)\simeq&H(2R\left|\sin\left(\phi_p/2\right)\right|)\\
&+H'(2R\left|\sin\left(\phi_p/2\right)\right|)\frac{R}{4\left|\sin\left(\frac{\phi_p}{2}\right)\right|}\left[\left(1-e^{i\phi_p}\right)\left(h_l-\overline{h_j}\right)+\left(1-e^{-i\phi_p}\right)\left(\overline{h_l}-{h_j}\right)\right].
\end{align*}
Substituting the linearization in \eqref{eq:Aflock_noCS} and
neglecting the second order terms, we obtain the following
characterization of $h_j''$
\begin{align}
h_j''=&\frac{1}{N}\sum_{l=1
}^NH(2R|\sin\phi_p|)\left[e^{i\phi_p}h'_l-h'_j\right]\nonumber\\
&+\frac{1}{N}\sum_{\substack{l=1\\l\neq j}}^N\left[G_1(\phi_p/2)(h_j-e^{i\phi_p}h_l)+G_2(\phi_p/2)(\overline{h_l}-e^{i\phi_p}\overline{h_j})\right].
\label{eq:Perturb}
\end{align}
In order to study the behavior of the perturbations $h_j$, we
reduce the complexity of the problem, assuming that $h_j$
satisfies the following relation
\begin{equation*}
h_j=\xi_+(t)e^{im\theta_j}+\xi_{-}(t)e^{-im\theta_j}, \quad
h'_j=\xi'_+(t)e^{im\theta_j}+\xi'_{-}(t)e^{-im\theta_j},\qquad
m\in \mathbb{N}.
\end{equation*}
Therefore, we can express $h_l$ in terms of $h_j$ as
\begin{equation*}
h_l=\xi_+(t)e^{im\theta_j}e^{im\phi_p}+\xi_{-}(t)e^{-im\theta_j}e^{-im\phi_p},\qquad
m\in \mathbb{N}.
\end{equation*}
Inserting the previous expressions in \eqref{eq:Perturb} and
gathering terms in $e^{i\theta_j m}$ and $e^{-i\theta_j m}$, we
can characterize $\xi_+$ and $\xi_-$ as
\begin{align*}
\xi_+''=&\frac{1}{N}\sum_{l=1
}^NH(2R|\sin\phi_p|)\left[e^{i\phi_p(m+1)}-1\right]\xi'_+
+I_1(m)\xi_+ +I_2(m)\overline{\xi}_-, \\
\overline{\xi}_-''=&\frac{1}{N}\sum_{l=1
}^NH(2R|\sin\phi_p|)\left[e^{i\phi_p(m-1)}-1\right]\overline{\xi}_-
+I_2(m)\xi_+ +I_1(-m)\overline{\xi}_-,
\end{align*}
where $I_1$ and $I_2$ are defined in \eqref{eq:I1} and
\eqref{eq:I2}. Through a simple manipulation of the sum for the
linearized \emph{Cucker-Smale} term, we obtain that the expression
\begin{multline*}
\frac{1}{N}\sum_{l\neq j }H(2R|\sin\phi_p|)\left[e^{i\phi_p(m\pm1)}-1\right]=\\
\frac{1}{N}\sum_{l\neq j
}H(2R|\sin\phi_p|)\left[\cos(\phi_p(m\pm1))-1\right]+\frac{i}{N}\sum_{l\neq
j }H(2R|\sin\phi_p|)\sin(\phi_p(m\pm1)),
\end{multline*}
is real. Actually, $H(2R|\sin\phi_p|)$ and $\sin(\phi_p(m\pm1))$
are respectively symmetric and antisymmetric with respect to the
values of $\phi_p$, so the imaginary part vanishes. Recalling
the definition of $\phi_p$, we conclude
\begin{align*}
J_{\pm}(m)=&\frac{1}{N}\sum_{k=1}^{N}H\left(2R\left|\sin\left(\frac{2\pi p}{N}\right)\right|\right)\left[\cos\left(\frac{2\pi p}{N}(m\pm1)\right)-1\right]\nonumber\\
=&-\frac{4}{N}\sum_{k=1}^{N/2}H\left(2R\left|\sin\left(\frac{2\pi
p}{N}\right)\right|\right)\left[\sin^2\left(\frac{\pi
p}{N}(m\pm1)\right)\right].
\end{align*}
Therefore, we reduce the stability analysis to the following system
\[
\begin{pmatrix}
\xi_+''
\\\\
\overline{\xi}_-''
\end{pmatrix}
= \underbrace{\begin{pmatrix} I_1(m) & I_2(m)
\\\\
I_2(m) & I_1(-m)
\end{pmatrix}}_{M}
\begin{pmatrix}
\xi_+
\\\\
\overline{\xi}_-
\end{pmatrix}
+ \underbrace{\begin{pmatrix} J_+(m) & 0
\\\\
0 & J_-(m)
\end{pmatrix}}_{J}
\begin{pmatrix}
\xi'_+
\\\\
\bar{\xi'_-}
\end{pmatrix}.
\]
Taking the conjugate in the second equation and relabeling
$\overline{\xi}_-$ with ${\xi}_-$ as in \cite{BUKB2}, the previous
system is equivalent to
\begin{equation}\label{eq:systemCS}
\frac{d}{dt}
\begin{pmatrix}
\xi_+
\\
{\xi}_-
\\
\eta_+
\\
{\eta}_-
\end{pmatrix}
=
\begin{pmatrix}
0 &0& 1& 0 \\
0 &0& 0& 1\\
I_1(m) & I_2(m)  & J_+(m) & 0\\
I_2(m) & I_1(-m) & 0         & J_-(m)\\
\end{pmatrix}
\begin{pmatrix}
\xi_+
\\
{\xi}_-
\\
\eta_+
\\
{\eta}_-
\end{pmatrix}=
\begin{pmatrix}
0 & \Id \\
M & J
\end{pmatrix}
\begin{pmatrix}
\xi_+
\\
{\xi}_-
\\
\eta_+
\\
{\eta}_-
\end{pmatrix},
\end{equation}
where $\eta_\pm=\xi'_\pm$.

At this point, we will do a stability analysis based on the
eigenvalues of the matrix of the previous system in a similar way
as in Section~\ref{sec:nocuckersmale}. If instead of the
self-propelled/ friction term we use a Cucker-Smale type alignment
term
\begin{equation*}
-\sum^{N}_{l=1} g\left(|x_j - x_l| \right)(v_j - v_l),
\end{equation*}
where $g(r)$ denotes any strictly positive function, the corresponding stability matrix $L_{{\rm CS}}$ for the flock reads
\begin{equation*}
L_{{ \rm CS } } = \begin{pmatrix} 0 & \mathrm{Id} \\ \mathbf{M} & -G \end{pmatrix}.
\end{equation*}
As before, $\mathbf{M}$ denotes stability matrix of the
first order model. As the alignment term is linear in the
velocity, the matrix $G$ acts on $\vv = (v_1,\ldots,v_{N})^{T}, \;
v_j \in \mathbb{R}^2,$ according to the relation
\begin{equation*}
(G\vv)_{j} = \sum^{N}_{l=1} g\left(|x_j - x_l| \right)(v_j - v_l).
\end{equation*}
In particular, if we denote $||v_j - v_l||^{2}_{2} := (v_{j} -
v_{l})^{*}(v_j-v_l)$ then this relation implies that
\begin{equation*}
\vv^{*} G \vv  = \frac{1}{2}\sum^{N}_{j,l = 1} g\left(|x_j - x_l|\right)|| v_j - v_l||^{2}_{2}.
\end{equation*}
Consequently, $G$ is positive semi-definite and $G\vv = \0$ if and
only if $\vv$ is ``constant'' in the sense that $v_{j} \equiv w$
for some fixed $w \in \mathbb{R}^{2}$. In other words,
$\mathrm{ker}(G) = \mathrm{span} \left\{\mathbf{e}_1, \mathbf{e}_2
\right\}$. By translation invariance of the first order model,
both $\mathbf{e}_1 \in \ker(\mathbf{M})$ and $\mathbf{e}_2 \in
\ker(\mathbf{M})$ as well.

Note that the eigenvalue problem for $L_{ {\rm CS} }$ is again
equivalent to the following quadratic eigenvalue problem for $\x
\in \mathbb{C}^{2N}$: $\lambda^{2} \x + \lambda G \x - \mathbf{M}
\x = \0$. Assuming the normalization $\x^*\x = 1$, the quadratic
formula then implies that
\begin{equation*}
\lambda = \frac{ - \x^*G\x \pm \sqrt{ (\x^*G\x)^2 + 4 \x^*\mathbf{M} \x } }{2}.
\end{equation*}
From this relation and the fact that $\mathrm{ker}(G) \subset
\mathrm{ker}(\mathbf{M})$ we conclude that $\mathrm{ker}(L_{ {\rm
CS} }) = \mathrm{ker}(\mathbf{M})$, and moreover that
$\Re(\lambda) = 0 \quad \Leftrightarrow \quad \lambda = 0$.
Furthermore, $\mathbf{e}_1$ and $\mathbf{e}_2$ generate a single
generalized eigenvector whereas each remaining $\x \in
\ker(\mathbf{M})$ generates no generalized eigenvectors. Indeed,
corresponding to each $\x \in \ker(\mathbf{M})$ the system of
equations
\begin{equation*}
\begin{pmatrix} 0 & \mathrm{Id} \\ \mathbf{M} & -G \end{pmatrix} \begin{pmatrix} \uu \\ \ww \end{pmatrix} = \begin{pmatrix} \x \\ \0 \end{pmatrix}.
\end{equation*}
has a solution if and only if $G\x = \0$ and $\uu \in
\ker(\mathbf{M})$. Additionally, if $\x = \mathbf{e}_i$ then for
any $\uu \in \ker(\mathbf{M})$ the system of equations
\begin{equation*}
\begin{pmatrix} 0 & \mathrm{Id} \\ \mathbf{M} & -G \end{pmatrix} \begin{pmatrix} \tilde{\uu} \\ \tilde{\ww} \end{pmatrix} = \begin{pmatrix} \uu \\ \x \end{pmatrix}
\end{equation*}
has no solutions. This follows by multiplying the second equation
by $\x^*$, then using the facts that $\mathbf{e}_i \in \ker(G)
\subset \ker(\mathbf{M})$ and the facts that $G$ and $\mathbf{M}$
are symmetric. In other words, if $g(r)$ is any strictly positive
function then
\begin{equation*}
\det( L_{{\rm CS}} - \lambda \mathrm{Id} ) = \lambda^{2+\dim(\ker(\mathbf{M}))}p_{g}(\lambda),
\end{equation*}
for some polynomial $p_{g}(\lambda)$ that has non-zero roots.
Since this equation holds for any strictly positive function
$g(r),$ we may follow the proof of Theorem \ref{thm:instability1}
to conclude that the second order model has an eigenvalue with
positive real part if and only if the first order system has a
positive eigenvalue. Moreover, the vectors
$(\mathbf{e}_i,\mathbf{e}_i)$ for each $i=1,2$ furnish generalized
eigenvectors with eigenvalue zero, so the ring flock is always
linearly unstable. As a summary, we have shown:
\begin{theorem}
The linearized second order system \eqref{model2} around the flock
ring solution has an eigenvalue with positive real part if and
only if the linearized first order system around the ring solution
has a positive eigenvalue. As a consequence, the flock ring
solution is unstable for $m$-mode perturbations for the second
order model \eqref{model2} if and only if the ring solution is
unstable for $m$-mode perturbations for the first order model
\eqref{eq:firstorder}.
\end{theorem}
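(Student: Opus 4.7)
The plan is to reuse the scheme from the proof of Theorem~\ref{thm:instability1}, since the analysis of $L_{\mathrm{CS}}$ preceding the statement has already assembled all of the structural ingredients needed. In particular, the quadratic formula
\[
\lambda = \frac{-\x^{*}G\x \pm \sqrt{(\x^{*}G\x)^2 + 4\x^{*}\mathbf{M}\x}}{2},
\]
the identity $\ker(L_{\mathrm{CS}}) = \ker(\mathbf{M})$, and the characteristic polynomial factorization $\det(L_{\mathrm{CS}} - \lambda\Id) = \lambda^{2+\dim\ker(\mathbf{M})} p_{g}(\lambda)$ with $p_{g}$ free of purely imaginary roots will play the roles that Lemma~\ref{lem:nonzeros} and Corollary~\ref{cor:char_poly} played before.

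First I would dispatch the easy direction: if $\mu_{1} \leq 0$, so that $\x^{*}\mathbf{M}\x \leq 0$ for every unit vector $\x$, then since $G$ is positive semi-definite the radicand is real and bounded above by $(\x^{*}G\x)^{2}$, so every eigenvalue of $L_{\mathrm{CS}}$ has $\Re(\lambda) \leq 0$. For the converse, suppose $\mu_{1} > 0$. I would introduce the homotopy $g \rightsquigarrow \beta g$ for $\beta \in [0,\infty)$, producing a family $L_{\mathrm{CS}}^{(\beta)}$, and mimic the continuity argument of Theorem~\ref{thm:instability1}. At $\beta = 0$ the pair $(\x_{1}, \sqrt{\mu_{1}}\x_{1})$ furnishes an eigenvector of $L_{\mathrm{CS}}^{(0)}$ with positive eigenvalue $\sqrt{\mu_{1}}$, so $0$ belongs to the set
\[
\mathcal{A} := \bigl\{ \beta \in [0,\infty) : \max_{\lambda \in \sigma(L_{\mathrm{CS}}^{(\beta)})} \Re(\lambda) > 0 \bigr\}.
\]
Continuous dependence of the spectrum on $\beta$ makes $\mathcal{A}$ relatively open; for relative closedness, if $\beta_{l} \to \beta_{0} > 0$ with $\beta_{l} \in \mathcal{A}$ and associated eigenvalues $\lambda_{l} \to \lambda_{0}$ satisfying $\Re(\lambda_{l}) > 0$, then $\Re(\lambda_{0}) \geq 0$, and the factorization together with the absence of purely imaginary roots of $p_{\beta g}$ forces $\Re(\lambda_{0}) > 0$. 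Hence $\mathcal{A} = [0,\infty)$, and in particular $1 \in \mathcal{A}$, so the original second order model carries an eigenvalue with positive real part. The $m$-mode statement is obtained by specializing the same argument to the $4\times 4$ reduction in \eqref{eq:systemCS}, whose structure is a restriction of the general one.

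The main obstacle is the closedness step. One has to guarantee that as $\beta_{l} \to \beta_{0}$ the unstable eigenvalues cannot slide onto the imaginary axis except possibly at $\lambda = 0$, and that the algebraic multiplicity at the origin is rigid along the family so that a positive-real-part eigenvalue cannot be absorbed there. The preceding analysis of $\ker(G)$, the classification of generalized eigenvectors attached to $\mathbf{e}_{1}, \mathbf{e}_{2}$, and the resulting factorization $\det(L_{\mathrm{CS}}^{(\beta)} - \lambda \Id) = \lambda^{2+\dim\ker(\mathbf{M})} p_{\beta g}(\lambda)$ are precisely what exclude both scenarios uniformly for $\beta > 0$, so once these are invoked the proof reduces to the continuity/connectedness argument already run in the self-propulsion case.
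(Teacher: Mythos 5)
Your proposal is correct and follows essentially the same route as the paper: the authors establish exactly the factorization $\det(L_{\rm CS}-\lambda\Id)=\lambda^{2+\dim\ker(\mathbf{M})}p_g(\lambda)$ with $p_g$ having no zero-real-part roots, note that it holds for any strictly positive $g$ (hence uniformly along the family $\beta g$), and then explicitly ``follow the proof of Theorem~\ref{thm:instability1}'' via the same open/closed connectedness argument. Your identification of the closedness step as the crux, and of the rigidity of the multiplicity at the origin as what prevents absorption of unstable eigenvalues, matches the paper's reasoning.
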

The linear stability analysis of the previous system leads to the
characterization of the stability areas in
Figure~\ref{fig:stableareasCS}. We show that the stability
parameter regions for different values of $N$, and $\gamma=1$
coincide with the ones in Figure \ref{fig:stabreg}.
\begin{figure}[h!]
\centering
\includegraphics[scale=0.29]{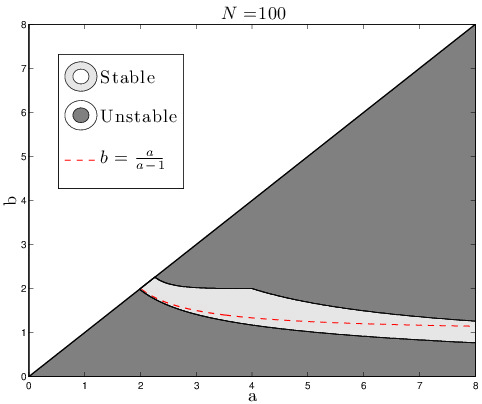}
\includegraphics[scale=0.29]{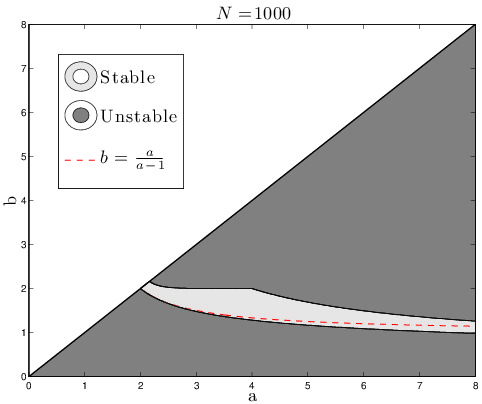}
\includegraphics[scale=0.29]{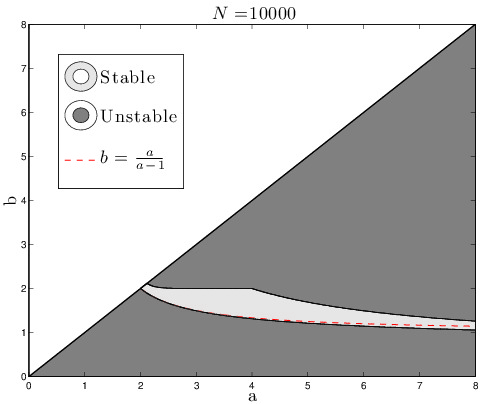}
\caption{Stability regions for flock ring solutions with the Cucker-Smale term for different values of $N$.}
\label{fig:stableareasCS}
\end{figure}

We also investigate the behavior of the eigenvalue with the
largest real part, $\Re(\lambda_1)$, of the linearized system
\eqref{eq:systemCS} against the increasing value of communication
strength $\gamma$. In Figure~\ref{fig:eigenvalueCS}, as the
potential gets more repulsive at the origin, we see the change
from stability to instability, and the rate of convergence to
equilibrium depending on $\gamma$.
\begin{figure}[h!]
\centering
\includegraphics[scale=0.4]{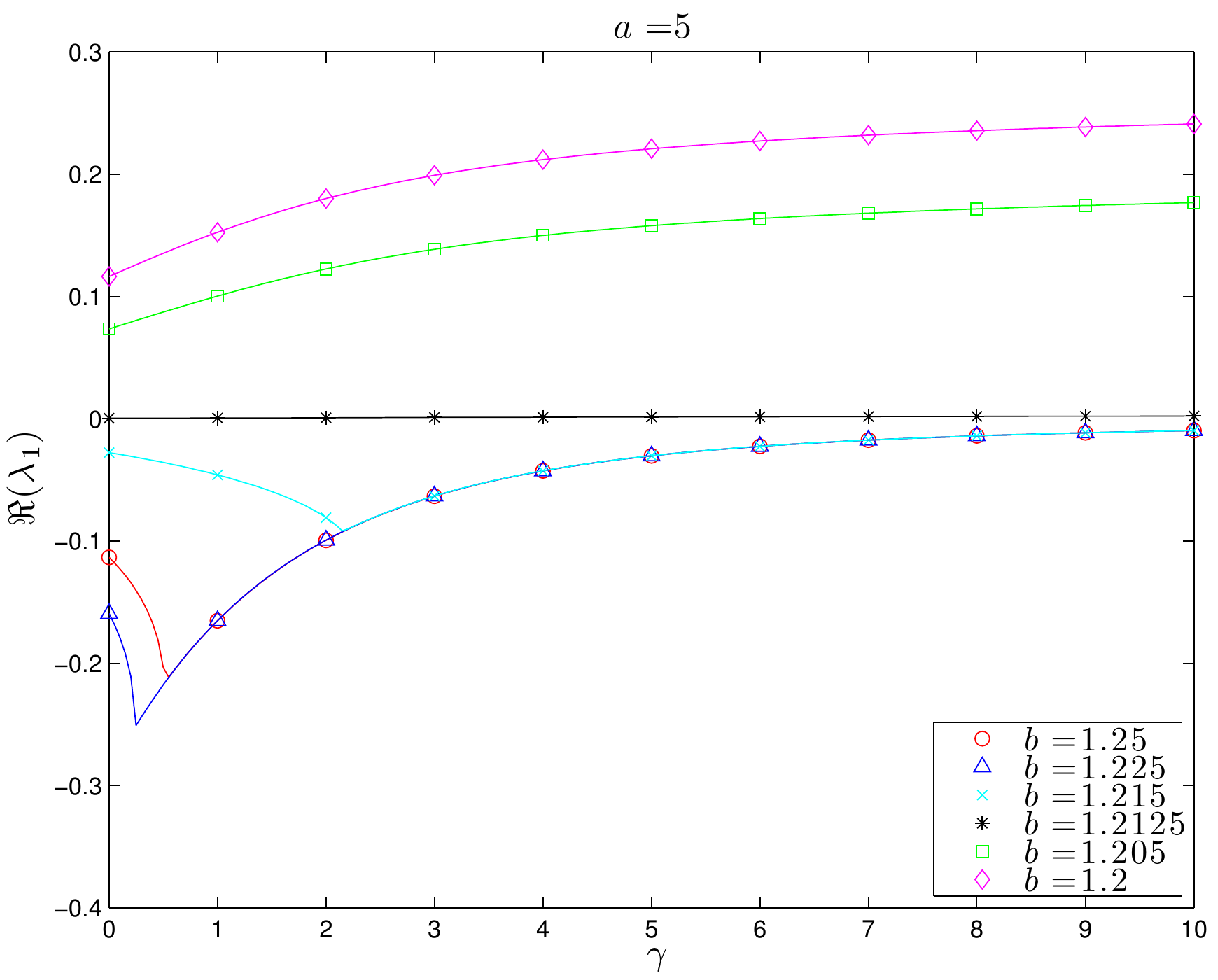}
\includegraphics[scale=0.4]{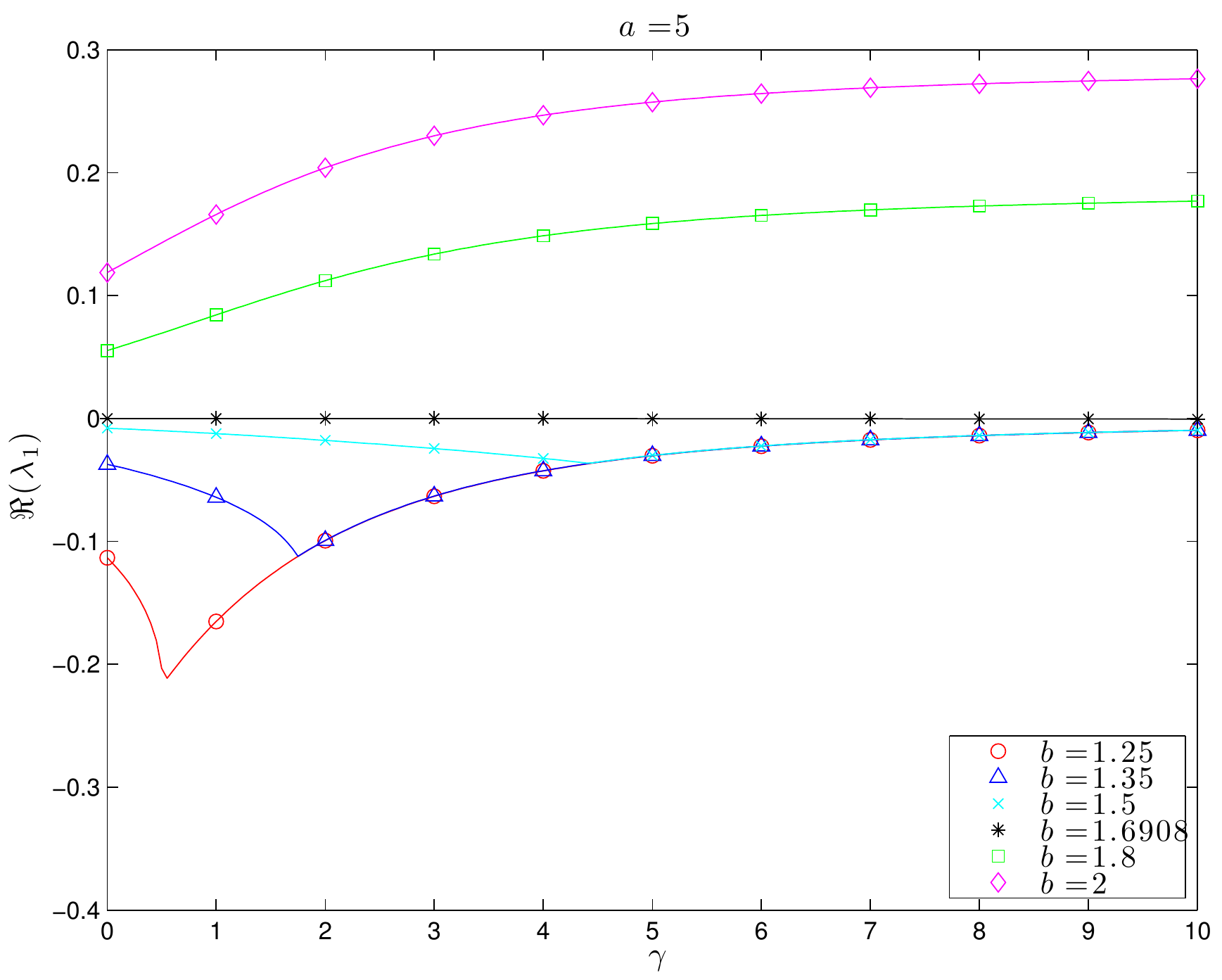}
\caption{The magnitude of $\Re(\lambda_1)$ is influenced by $\gamma$, for different values of $b$ and fixed $a=5$, $N=10000$.  }
\label{fig:eigenvalueCS}
\end{figure}
\section{Stability for mill solutions}\label{sec:millstability}
This section is devoted to complement the results in \cite{BUKB2} by analyzing the stability of mill ring solutions with repulsion.
\subsection{Linear stability analysis}

Let us consider the transformation
\[
\begin{cases}
    y_j(t)=O(t)x_j(t) \\
    z_j(t)=O(t)v_j(t)
\end{cases},\quad j=1,\dots, N
\]
where $O(t)$ is the rotation matrix defined as
\[
    O(t)=e^{St},\quad     S=\begin{pmatrix}0 & \omega\\ -\omega & 0 \end{pmatrix},\quad \mbox{and}\quad \dot{O}(t)=Se^{St}.
\]
Evaluating $\dot{y}_j(t)$ and $\dot{z}_j(t)$ and after some straightforward computations, we get
\[
\begin{cases}
\displaystyle\dot{y}_j(t)=Sy_j(t)+z_j(t)\\
\displaystyle\dot{z}_j(t)=Sz_j(t)+(\alpha-\beta|z_j|^2)z_j(t)-\frac{1}{N}\sum_{\substack{l=1\\l\neq j}}^N\nabla W(y_l-y_j)
\end{cases},\quad j=1,\dots, N.
\]
A linear stability analysis for mill rings was performed in
\cite{BUKB2}. Actually, for a fixed number of particles, we have a
mill ring solution given by
$(y_{j}^{0},z_{j}^{0})=(Re^{i\theta_j},0),$ where
$\theta_j=\frac{2\pi j }{N}$, and $R$ determined by equation
\eqref{eq:radiuseq}. With the same notation as in
Section~\ref{sec:linear_flock}, the analysis in \cite{BUKB2} leads
to the linear system
\begin{equation}
\begin{pmatrix}
\xi'_+\\{\xi}'_-\\\eta'_+\\{\eta}'_-
\end{pmatrix} =
\begin{pmatrix}
0 & 0 & 1 & 0
\\
0 & 0 & 0 & 1
\\
 -\omega i\alpha+\omega^2+I_1(m) &
-\omega i\alpha+I_2(m) & -\alpha-2\omega i & \alpha
\\
\omega i\alpha+I_2(m) &  \omega
i\alpha+\omega^2+I_1(-m) & \alpha& -\alpha+2\omega i
\end{pmatrix}
\begin{pmatrix}
\xi_+\\{\xi}_-\\\eta_+\\{\eta}_-
\end{pmatrix}.
\label{eq:systemMill}
\end{equation}
Let us remind that the perturbations are of the form
$\tilde{y}_j(t)=Re^{i\theta_j}(1+h_j(t))$, with
$h_j=\xi_+(t)e^{im\theta_j}+\xi_{-}(t)e^{-im\theta_j}$,
$m=2,3,\dots,$ such that $|h_j|\ll1$ and satisfying
\eqref{eq:perturbationhj}, with
$(\eta_+,{\eta}_-)=(\xi'_+,{\xi}'_-)$. We will make use of
\eqref{eq:systemMill} to study the stability of mill rings with
repulsion.

\subsection{Numerical tests}
Unlike the case of flock solutions where the asymptotic speed does
not play role in the linear stability, we will show that the
asymptotic speed $|u_0|$ can be used as a bifurcation parameter
for mills.

In Table~\ref{tab:SAreaMill} we numerically investigate the
behavior of the stability region for a fixed number of particles,
$N=1000$, and for increasing values of the asymptotic speed
$|u_0|$. We observe that the stability region shrinks with respect
to $a$ and gets larger with respect to $b$. Each stability region
in Table~\ref{tab:SAreaMill} is computed out of the intersection
of the stable areas for the system \eqref{eq:systemMill} for each
perturbation mode $m\ge 2$. Note that for $|u_0|=0$ the stability
region coincides with the one for the the first order model
\eqref{eq:firstorder} and for the flock ring solution.
\begin{table}[h!]
\begin{tabular}{>{\centering}m{4.55cm}>{\centering}m{4.55cm}>{\centering}m{4.55cm}}
\includegraphics[scale=0.295]{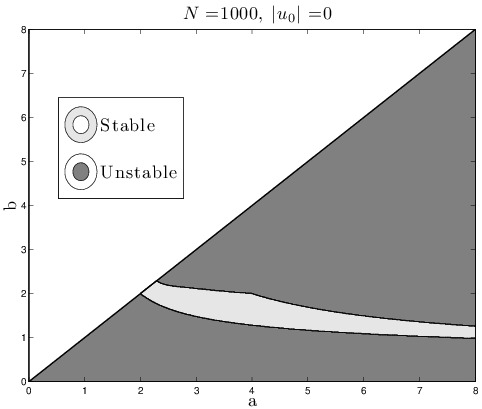}
&
\includegraphics[scale=0.295]{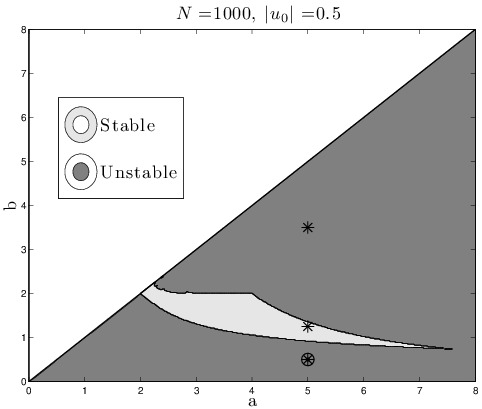}
&
\includegraphics[scale=0.295]{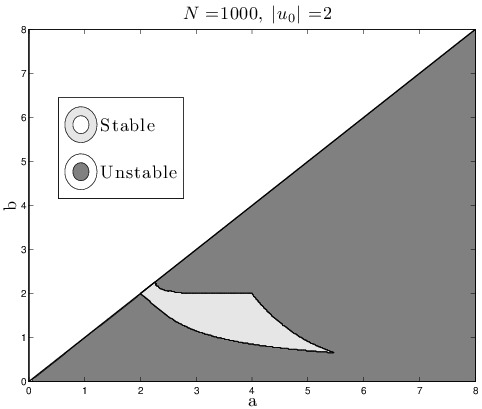}
\tabularnewline
\includegraphics[scale=0.295]{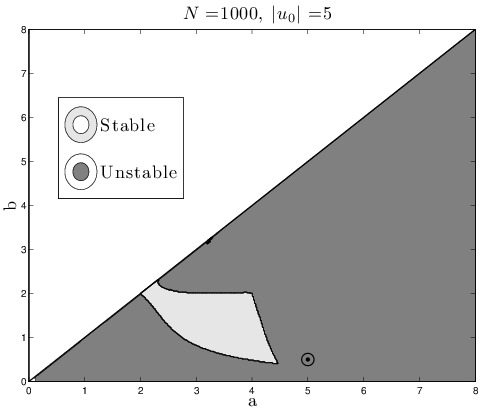}
&
\includegraphics[scale=0.295]{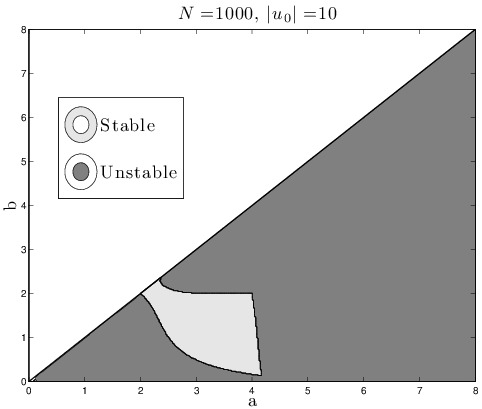}
&
\includegraphics[scale=0.295]{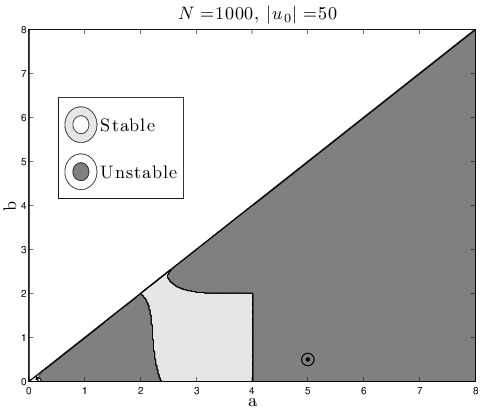}
\end{tabular}
\caption{Stability region for $N=1000$ and different values of the asymptotic speed $|u_0|$. Markers $(\odot)$ and $(\bigast)$ correspond to the explored parameters in Table~{\rm \ref{tab:fat2clust}} and Table~{\rm \ref{tab:fat2clustb}}.}
\label{tab:SAreaMill}
\end{table}

A similar analysis, as done in Subsection~\ref{sec:NumFlock}, can
be performed to study the formation of fat mills and clustered
mill solutions. We show how both the fattening and the clustering
instability are triggered by tunning the asymptotic speed for a
choice of the interaction potential ($a$ and $b$).

In the case of flock ring solutions we observe cluster solutions
or annulus solutions when parameters $a$ and $b$ are chosen
respectively ``below" or ``above" the stability region. In the
case of mill solutions, a similar behavior is observed, but this
will depend also on the chosen value of $|u_0|$. As an example, we
fix $(a,b)=(5,0.5)$, marked as $(\odot)$ in
Table~\ref{tab:SAreaMill}, and we observe the behavioral change of
the system for increasing values of the asymptotic speed.

Table~\ref{tab:fat2clust} exhibits this switching behavior from
a fat mill to a cluster pattern along with the increment of the
asymptotic speed. We observe that for small values of the
asymptotic speed fat mill solutions are stable patterns, but when
increasing the value of $\abs{u_0}$ the stable solutions form a
clustered mill. The first row shows the evolution of the system with
asymptotic speed $\abs{u_0}=0.25$ towards an annulus mill.
In the second row we take $\abs{u_0}=0.5$ the previous stable solution is reshaped to a fat clusters pattern.
The speed in the third row is switched to $\abs{u_0}=5$ and clusters on lines emerge as a stable configuration. Increasing the speed to $\abs{u_0}=50$ in the fourth row we can observe that clusters on ``points'' are stable solutions.

\begin{table}[h!]
\centering
\begin{tabular}{>{\centering}m{.7cm}>{\centering}m{3.25cm}>{\centering}m{3.25cm}>{\centering}m{3.25cm}>{\centering}m{3.25cm}}
$\abs{u_0}$ & &  &  & Stable
\tabularnewline
$0.25$
&
\includegraphics[scale=0.16]{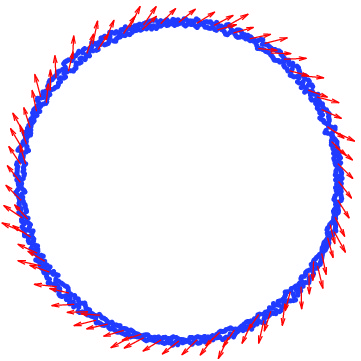}
&
\includegraphics[scale=0.16]{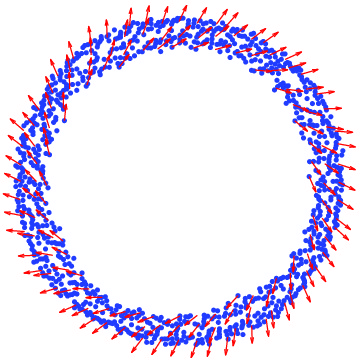}
&
\includegraphics[scale=0.16]{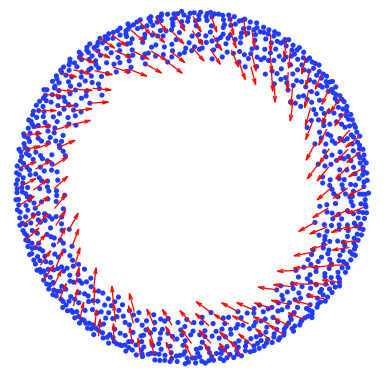}
&
\includegraphics[scale=0.16]{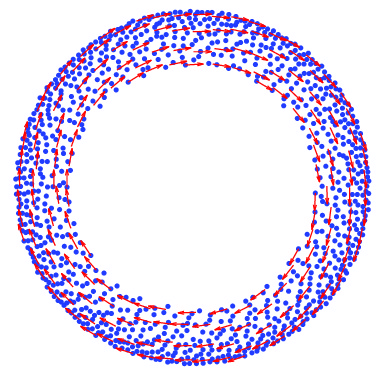}
\tabularnewline
$0.5$
&
\includegraphics[scale=0.16]{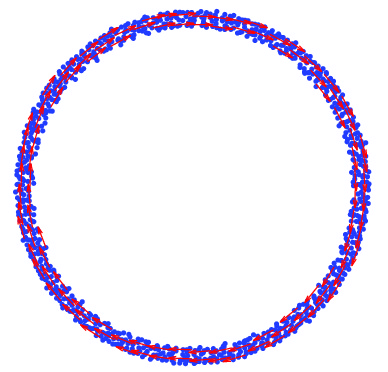}
&
\includegraphics[scale=0.16]{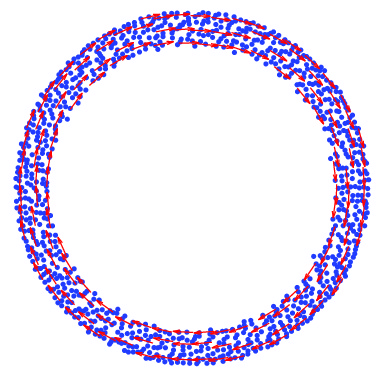}
&
\includegraphics[scale=0.16]{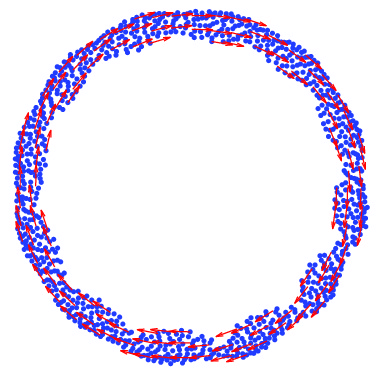}
&
\includegraphics[scale=0.16]{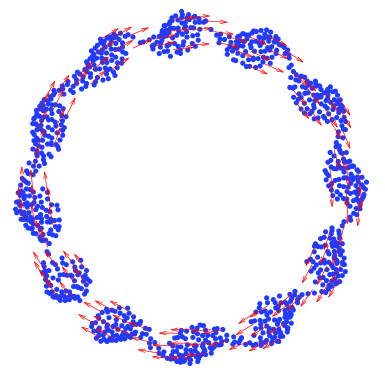}
\tabularnewline
$5$
&
\includegraphics[scale=0.16]{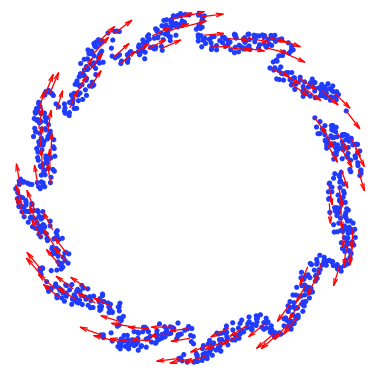}
&
\includegraphics[scale=0.16]{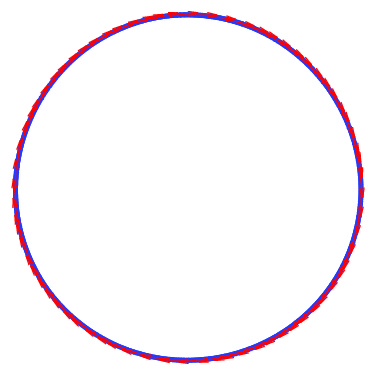}
&
\includegraphics[scale=0.16]{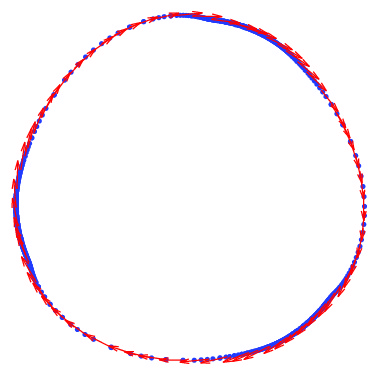}
&
\includegraphics[scale=0.16]{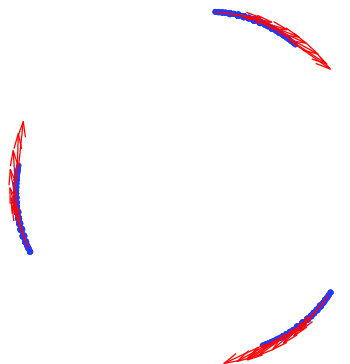}
\tabularnewline
$50$
&
\includegraphics[scale=0.16]{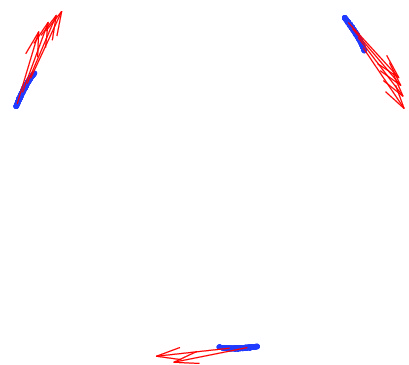}
&
\includegraphics[scale=0.16]{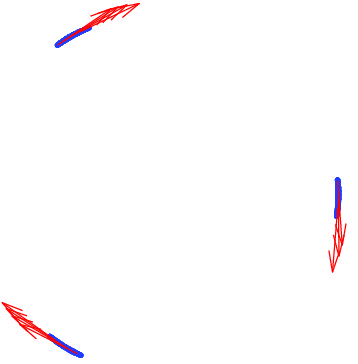}
&
\includegraphics[scale=0.16]{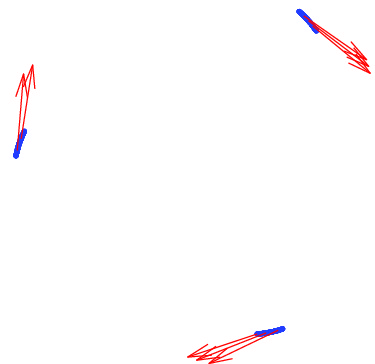}
&
\includegraphics[scale=0.16]{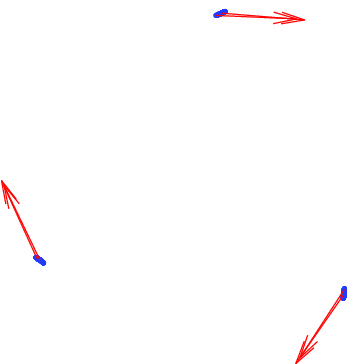}
\end{tabular}
\caption{$N=1000$ particles, $a=5, b=0.5$. The table shows the evolution of a mill ring for increasing values of the speed $\abs{u_0}$. Each row depicts the behavior of the system for a fixed speed, until a stable state is reached. The evolution of the second, third and fourth row is computed starting from the stable pattern of the previous line.}
\label{tab:fat2clust}
\end{table}
In Figure~\ref{fig:Stableu0b} we numerically show how the stability
region looks like in terms of $(|u_0|, b)$, with $a$ fixed at $0.5$, and we enlighten with marker points $(\odot)$ the parameter choices  of Table~\ref{tab:fat2clust}, first and second lines.

For the sake of completeness, we enrich the analysis fixing $\abs{u_0}=0.5$ and considering different values of $b$, in order to cross the stability region. Therefore in Table~\ref{tab:fat2clustb} we show the evolution of a mill ring solution with $b$ taken subsequently equal to $0.5, 1.25, 3.5$, parameter choices are marked as ($\bigast$) in Figure~\ref{fig:Stableu0b}. The first line of 
Table~\ref{tab:fat2clustb} shows the convergence to the same stable state as the one in second line of Table~\ref{tab:fat2clust}, but,  since the system starts to evolve directly from a ring mill solution, the transient behavior is  different.
Parameters in second line belong to the stability region, see Figure~\ref{fig:Stableu0b}. Therefore, the stable state becomes a mill ring solution.
Finally, in the third line we increase $b$ and a three point cluster solution is observed as stable pattern.
\begin{table}[h!]
\centering
\begin{tabular}{>{\centering}m{.7cm}>{\centering}m{3.25cm}>{\centering}m{3.25cm}>{\centering}m{3.25cm}>{\centering}m{3.25cm}}
$b$ & &  &  & Stable
\tabularnewline
$0.5$
&
\includegraphics[scale=0.16]{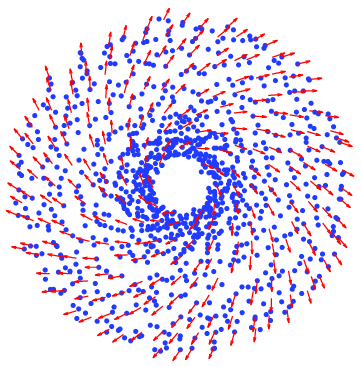}
&
\includegraphics[scale=0.16]{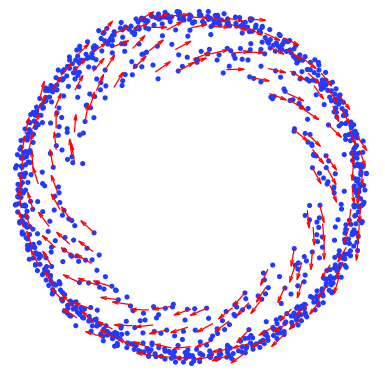}
&
\includegraphics[scale=0.16]{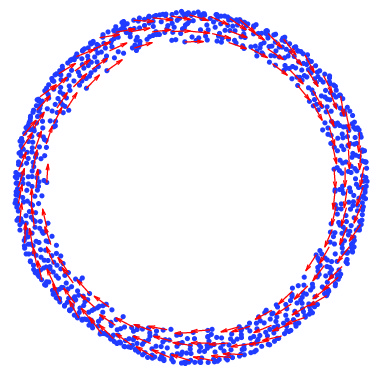}
&
\includegraphics[scale=0.16]{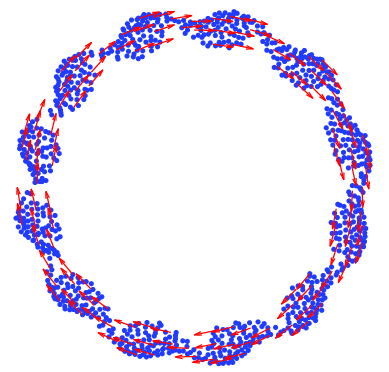}
\tabularnewline
$1.25$
&
\includegraphics[scale=0.16]{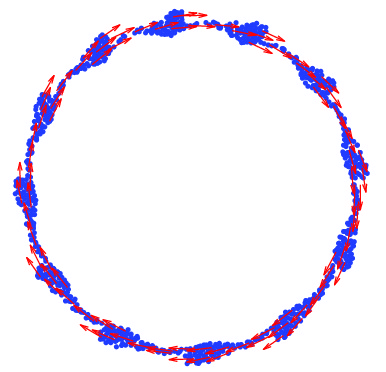}
&
\includegraphics[scale=0.16]{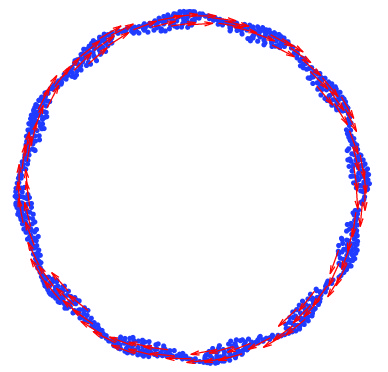}
&
\includegraphics[scale=0.16]{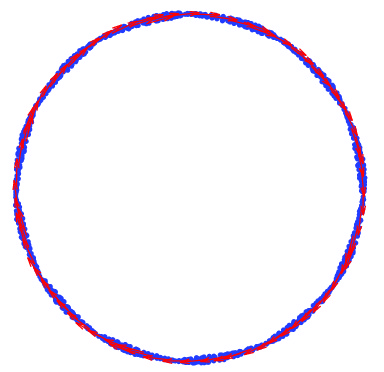}
&
\includegraphics[scale=0.16]{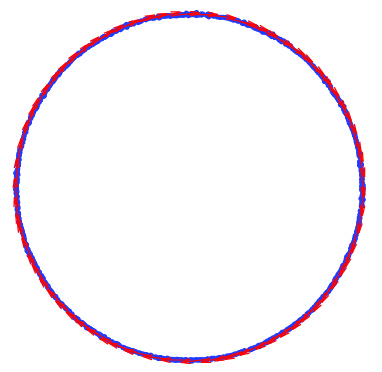}
\tabularnewline
$3.5$
&
\includegraphics[scale=0.16]{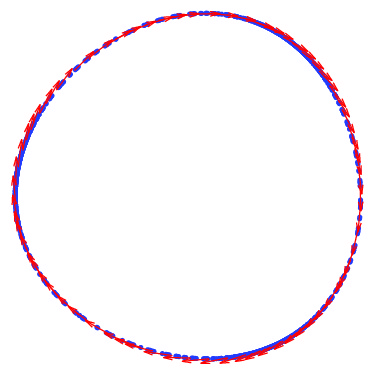}
&
\includegraphics[scale=0.16]{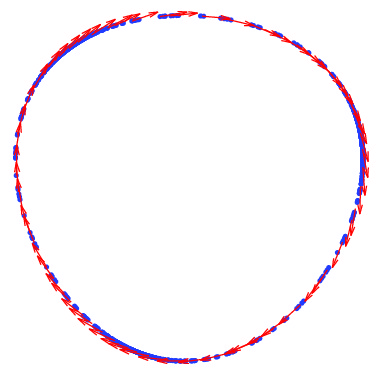}
&
\includegraphics[scale=0.16]{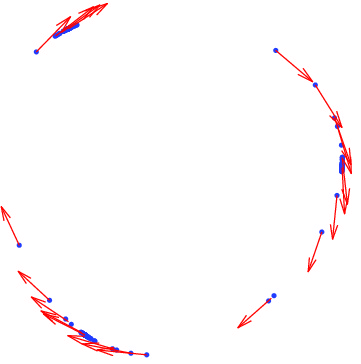}
&
\includegraphics[scale=0.16]{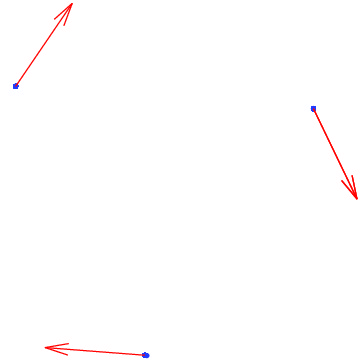}
\end{tabular}
\caption{$N=1000$ particles, $a=5, |u_0|=0.5$. The table shows the evolution of a mill ring for increasing values of  $b$, i.e. decreasing repulsion. The evolution of the second and the third row is computed starting from the stable pattern of the previous line.}
\label{tab:fat2clustb}
\end{table}
\begin{figure}[h!]
\centering
\includegraphics[scale=0.45]{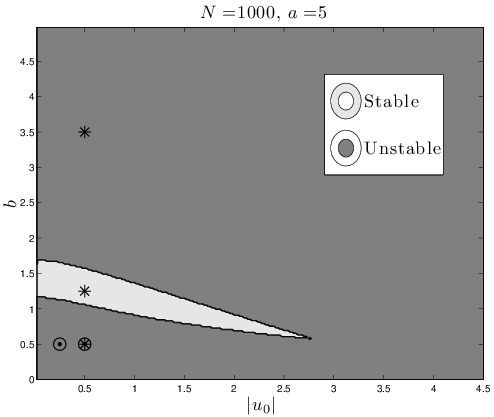}
\caption{Stability region for mill ring solution with parameter $a=5$, $N=1000$. Markers $(\odot)$ and $(\bigast)$ depict the parameter choices respectively for Table~{\rm \ref{tab:fat2clust}} and Table~{\rm \ref{tab:fat2clustb}}.}
\label{fig:Stableu0b}
\end{figure}
\subsubsection{Mill to Flock  and Flock to Mill behavior}

We numerically investigate the stability of mill and flock ring
solutions for small values of the asymptotic speed, $|u_0|$, and
the parameter $b$, which corresponds to a strong repulsion
condition.

We perform two representative simulations showing that for a
particular choice of the parameters,  mill ring solutions can
switch to fat flock solutions and conversely flock mill solutions
switch to fat mill patterns.

\begin{table}[h!]
\centering
\begin{tabular}{>{\centering}m{3.5cm}>{\centering}m{3.5cm}>{\centering}m{3.5cm}>{\centering}m{3.5cm}}
\includegraphics[scale=0.2]{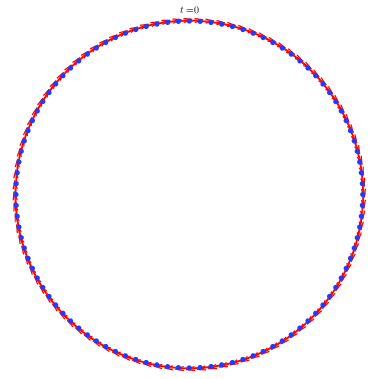}
&
\includegraphics[scale=0.2]{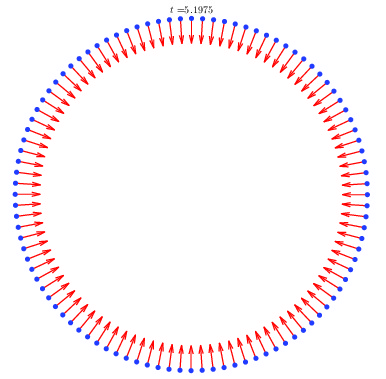}
&
\includegraphics[scale=0.2]{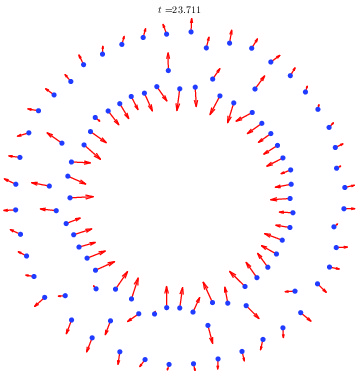}
&
\includegraphics[scale=0.2]{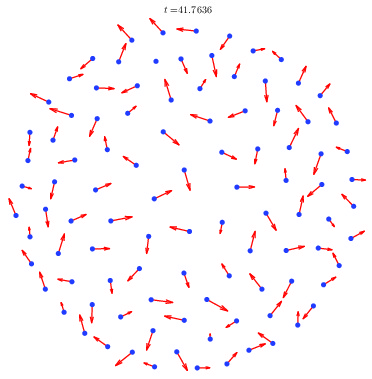}
\tabularnewline
\includegraphics[scale=0.2]{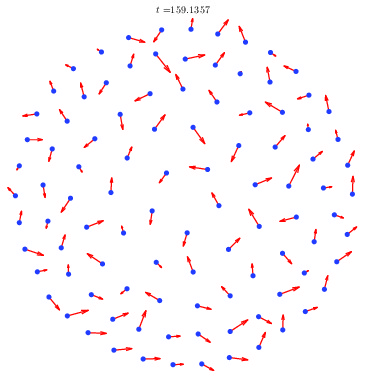}
&
\includegraphics[scale=0.2]{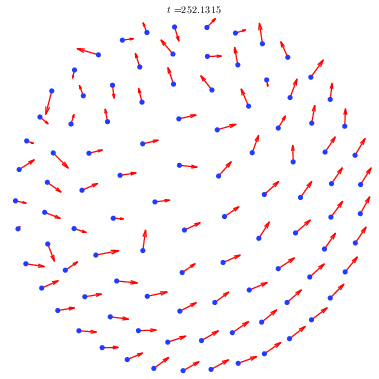}
&
\includegraphics[scale=0.2]{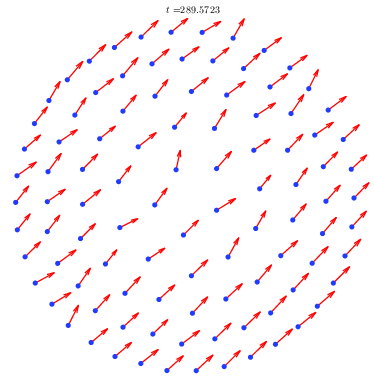}
&
\includegraphics[scale=0.2]{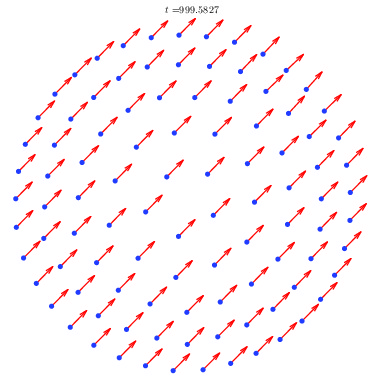}
\end{tabular}
\caption{System with $N=100$ agents, parameters are fixed $a=4$ and $b=0.0005$ and $|u_0|=0.01$. The first row shows that the initial mill ring configuration is unstable. The second row outlines the self organization of the system in a fat flock configuration.}
\label{tab:Mill2Flock}
\end{table}
In Table~\ref{tab:Mill2Flock} we take $N=100$ particles and we fix
$a=4, b=0.0005$ and $|u_0|=0.01$. The frames in the first row show
the instability of mill ring solutions for this choice of
parameters. The system initially evolves to an almost chaotic
state, then particles start to organize rotating around the center
of mass. This rotation actually causes the alignment of the agents
and the final fat flock configuration described in the second row.
In Table~\ref{tab:Flock2Mill} we consider as initial state a flock
ring solution. The parameters of the model are $N=100$, $a=4$,
$b=0.001$ and $|u_0|=0.1$. The first row of the table illustrates
that the initial configuration is not a stable solution.
Therefore, the symmetry of the flock ring is broken and the system
exhibits a chaotic behavior. In the second row a rotating dynamic
emerges out of the disordered state and finally the system
stabilizes to a fat mill solution.
\begin{table}[h!]
\centering
\begin{tabular}{>{\centering}m{3.5cm}>{\centering}m{3.5cm}>{\centering}m{3.5cm}>{\centering}m{3.5cm}}
\includegraphics[scale=0.2]{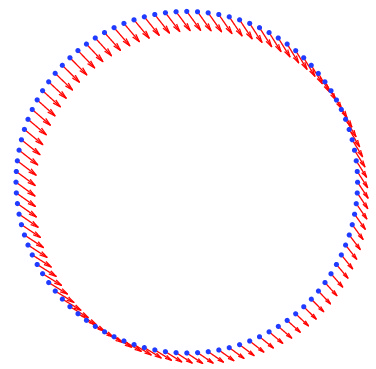}
&
\includegraphics[scale=0.2]{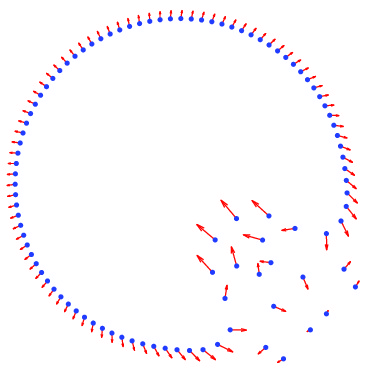}
&
\includegraphics[scale=0.2]{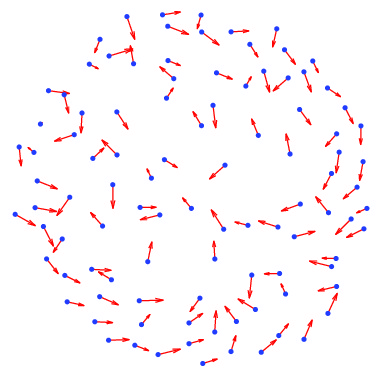}
&
\includegraphics[scale=0.2]{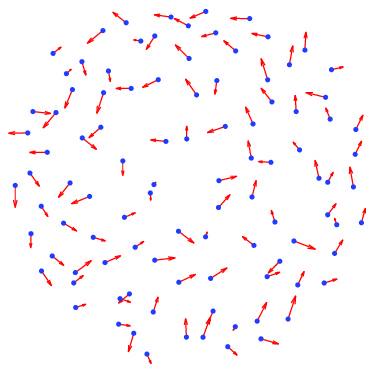}
\tabularnewline
\includegraphics[scale=0.2]{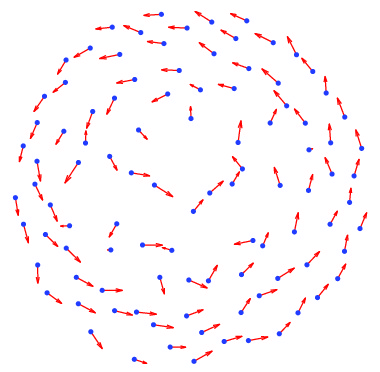}
&
\includegraphics[scale=0.2]{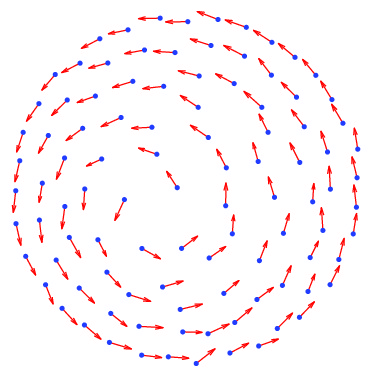}
&
\includegraphics[scale=0.2]{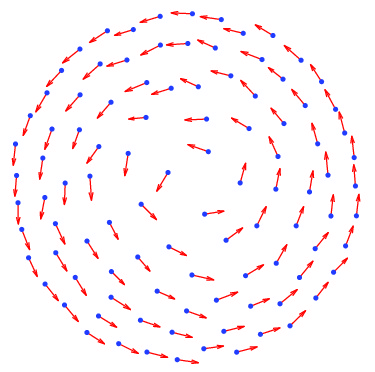}
&
\includegraphics[scale=0.2]{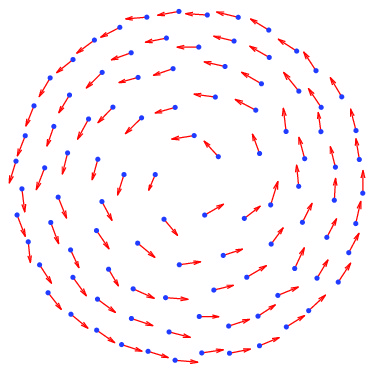}
\end{tabular}
\caption{System with $N=100$ agents, parameters $a=4$ and $b=0.001$ and $|u_0|=0.1$. The first row shows the instability of the flock ring solution while the second exhibits the convergence to a fat mill type solution.}
\label{tab:Flock2Mill}
\end{table}

These numerical tests show surprisingly that it is possible, with
a particular choice of the parameters, to obtain mill
configurations out of perturbations of initial flock solutions and
flock solutions out of perturbations of mill ring solutions. These
heteroclinic-kind solutions have not been previously reported. We
also remark that the parameter choice is connected to the number
of agents we are considering; changing $N$ means finding another
set of parameters for which the same switching behavior occurs.

\bibliographystyle{plain}
\bibliography{refs}

\end{document}